\documentclass{article}
\usepackage{amsmath}
\usepackage{amssymb}
\usepackage{fullpage}
\usepackage{algpseudocode}
\usepackage{graphics}
\usepackage{booktabs}
\usepackage{cite}
\usepackage{bbm}
\usepackage{tikz}
\usepackage{bm}
\usepackage{appendix}
\usepackage{hyperref}
\usepackage[margin=3cm]{geometry}

\usepackage{algorithm2e}

\usepackage{graphicx}
\usepackage{subfig}

\usepackage{multirow}
\usepackage{rotating}
\usepackage{verbatim}
\usepackage{optidef}
\usepackage{cleveref}
\usepackage{array}
\usepackage{tabularx}
\usepackage{arydshln}
\usepackage{mathtools}
\usepackage{easybmat}
\usetikzlibrary{positioning}

\usepackage{enumerate}
\usepackage{enumitem}
\usepackage{rotating}
\usepackage{pdflscape}

\usepackage{tikz,forest} 
\usetikzlibrary{arrows.meta}
\forestset{
    .style={
        for tree={
            base=bottom,
            child anchor=north,
            align=center,
            s sep+=1cm,
    straight edge/.style={
        edge path={\noexpand\path[\forestoption{edge},thick,-{Latex}] 
        (!u.parent anchor) -- (.child anchor);}
    },
    if n children={0}
        {tier=word, draw, thick, rectangle}
        {draw, diamond, thick, aspect=2},
    if n=1{%
        edge path={\noexpand\path[\forestoption{edge},thick,-{Latex}] 
        (!u.parent anchor) -| (.child anchor) node[pos=.2, above] {Y};}
        }{
        edge path={\noexpand\path[\forestoption{edge},thick,-{Latex}] 
        (!u.parent anchor) -| (.child anchor) node[pos=.2, above] {N};}
        }
        }
    }
}

\DeclarePairedDelimiterX{\Set}[1]\{\}{\setargs{#1}}
\NewDocumentCommand{\setargs}{>{\SplitArgument{1}{|}}m}
{\setargsaux#1}
\NewDocumentCommand{\setargsaux}{mm}
{\IfNoValueTF{#2}{#1}{\nonscript\,#1\nonscript\;\delimsize\vert\allowbreak \nonscript\:\mathopen{}#2\nonscript\,}}

\newcommand*\xbar[1]{%
  \hbox{%
    \vbox{%
      \hrule height 0.5pt 
      \kern0.5ex
      \hbox{%
        \kern-0.1em
        \ensuremath{#1}%
        \kern-0.1em
      }%
    }%
  }%
} 

\usepackage{tikz,pgfplots,xpatch}
\usetikzlibrary{shapes.multipart,backgrounds,shapes,fit}
\usetikzlibrary{arrows,decorations.markings,shapes.arrows,arrows.meta}

\newcommand{\E}{\mathbb{E}}

\newcommand{\R}{\mathbb{R}}

\renewcommand{\P}{\mathbb{P}}

\DeclareMathOperator{\OPT}{\operatorname{OPT}}
\DeclareMathOperator{\ROPT}{\operatorname{RPOPT}}
\DeclareMathOperator{\EOPT}{\operatorname{EBOPT}}

\definecolor{mypink3}{cmyk}{0, 0.7808, 0.4429, 0.1412}

\newcommand{\norm}[1]{\left\lVert#1\right\rVert}

\newcommand{\ev}[0]{\mathbf{e}}

\newcommand{\xv}[0]{\mathbf{x}}
\newcommand{\yv}[0]{\mathbf{y}}

\newcommand{\bv}[0]{\mathbf{b}}
\newcommand{\gv}[0]{\mathbf{g}}
\newcommand{\sv}[0]{\mathbf{s}}

\newcommand{\rv}[0]{\mathbf{r}}

\newcommand{\tv}[0]{\mathbf{t}}

\newtheorem{thm}{Theorem}[section]

\usepackage{booktabs}

\newtheorem{lem}[thm]{Lemma}

\newtheorem{prop}[thm]{Proposition}

\newtheorem{rem}[thm]{Remark} 
\newenvironment{proof}[1][]
   {\textsc{Proof#1: }}
   {\hfill$\Box$\smallskip}

\newcommand{\iprod}[2]{\left\langle {#1}, {#2} \right\rangle}

\newcounter{algorithmctr}[section]
\renewcommand{\thealgorithmctr}{\thesection.\arabic{algorithmctr}}
    {\refstepcounter{algorithmctr}\begin{list}{}{%
        \setlength{\rightmargin}{.05\linewidth}%
        \setlength{\leftmargin}{.05\linewidth}}%
        \item[]{\setlength{\parskip}{0ex}\bigskip\par%
         \nopagebreak%
         \underline{{\bf Algorithm \thealgorithmctr.} \emph{#1.}}}}%
    {{\setlength{\parskip}{-1ex}\nopagebreak\smallskip\par} \end{list}}

\makeatletter
\newsavebox\myboxA
\newsavebox\myboxB
\newlength\mylenA

\makeatother

\title{Random projection with linear equalities and inequalities}
\author{Santanu S. Dey and Jingye Xu}

\begin{document}

\maketitle
\begin{abstract}
Random projection is a preprocessing technique for optimization that reduces the number of linear equality constraints by replacing them with random linear combinations while approximately preserving the optimal value. In this paper, we emphasize the role of suprema of random processes, and in particular generic chaining, in explaining this phenomenon. By combining Lagrangian duality with generic chaining, we obtain shorter proofs and sharper guarantees that apply beyond the typical polyhedral settings based on the Johnson--Lindenstrauss lemma. More importantly, this perspective allows us to extend random projection to problems with linear inequality constraints. Although randomly projected inequality constraints generally define neither a relaxation nor a restriction of the original feasible region, we show that random projection can nevertheless approximately preserve the dual bound. The key is a new construction for randomly projecting dual certificates that preserves dual feasibility and differs from existing constructions motivated by the Johnson--Lindenstrauss lemma.

\end{abstract}

\section{Introduction}
\label{sec:introduction}

Consider a convex program
\begin{equation}
    \label{prob_eq}
    \begin{aligned}
        \OPT_{=} := \inf_{\xv} \quad & f(\xv) \\
        \text{s.t.} \quad
        & A\xv = \bv, \\
        & \xv \in S,
    \end{aligned}
\end{equation}
where $S \subseteq \R^{n}$ is a closed convex set,
$f:\R^n\to\R$ is a convex function, and
$A \in \R^{m\times n}$. 

To apply random projection to \eqref{prob_eq}, we typically sample a matrix
$G\in\R^{k\times m}$ whose entries are independently distributed as Gaussian 
$N(0,1/k)$, where $k\ll m$. We then replace the original system
$A\xv=\bv$ with $k$ random linear combinations of its constraints:
\begin{equation}
    \label{prob_eq_proj}
    \begin{aligned}
        \ROPT_{=}(k) := \inf_{\xv} \quad & f(\xv) \\
        \text{s.t.} \quad
        & GA\xv = G\bv, \\
        & \xv \in S.
    \end{aligned}
\end{equation}
Since $k\ll m$, the projected formulation contains substantially
fewer equality constraints than the original problem. This reduction
can decrease the costs of model construction, matrix factorization,
and linear-system solution, which are often major computational
bottlenecks in large-scale optimization. Moreover, every feasible
solution of \eqref{prob_eq} is feasible for \eqref{prob_eq_proj}.
Therefore, the projected problem is a relaxation of the original
problem, and hence
$\ROPT_{=}(k) \leq \OPT_{=}.
$

The central 
question is whether the projected relaxation
remains sufficiently tight. Under suitable assumptions and with $k$
chosen according to the geometry of $S$ and $A$, one seeks a
high-probability guarantee of the form
$
    \OPT_{=} - \epsilon(k)
    \leq
    \ROPT_{=}(k),
$
where $\epsilon(k)>0$ decreases as $k$ increases. 
For example, Vu et al.~\cite{vu2018random} consider a linear program
over a scaled simplex under a normalization assumption on the columns
of $A$ and $\bv$. Let $\lambda_\star$ denote a minimum-norm optimal dual solution
associated with the equality constraints $A\xv=\bv$. They show that
\[
    \epsilon(k)
    =
    O\!\left(
        \|\lambda_\star\|_2
        \sqrt{\frac{\log(n)}{k}}
    \right)
\]
with high probability.
Extensions of random-projection techniques to quadratic programs are
studied in~\cite{dambrosio2020random}, while extensions to symmetric
conic programs, including linear, second-order cone, and semidefinite
programs, are developed in~\cite{liberti2021random}. We also note that, in the random-projection literature, it is typically assumed that the feasible region is contained in a suitably scaled simplex, and existing proofs often exploit this assumption explicitly.

Random projection is also closely related to another dimensionality-reduction
technique, known as \emph{random embedding}, in which the decision variables
are restricted to a randomly generated low-dimensional linear subspace.
For example, when random embedding is applied to \eqref{prob_eq}, we sample
a matrix
$
    P \in \R^{n \times k},
$
whose entries are independently distributed as $N(0,1/k)$, where $k \ll n$,
and parameterize the decision vector as
$
    \xv = P\yv.
$
This gives the embedded problem:
\begin{equation}
    \label{prob_eq_ebd}
    \begin{aligned}
        \EOPT_{=}(k) := \inf_{\yv} \quad & f(P\yv) \\
        \text{s.t.} \quad
        & AP\yv = \bv, \\
        & P\yv \in S.
    \end{aligned}
\end{equation}
Under suitable assumptions and with the embedding dimension $k$ chosen
appropriately, one can obtain a similar two-sided approximation guarantee of
the form
$
    \OPT_{=}
    \leq
    \EOPT_{=}(k)
    \leq
    \OPT_{=}+\epsilon(k).
$
The close connection between random projection and random embedding can be
understood through Lagrangian duality. In particular, randomly aggregating the
primal equality constraints is equivalent to restricting the corresponding
dual variables to a randomly generated low-dimensional subspace.  As a result, the two techniques often admit closely related
approximation guarantees and proof techniques, although they act on different sides of the
primal--dual pair. For a broader discussion of random embedding methods in
optimization, we refer to~\cite{cartis2023global}.

In this paper, we emphasize the role of the theory of suprema of random processes, particularly \textit{generic chaining} \cite{talagrand2005generic}, in the analysis of dimension-reduction techniques for optimization.  
Many existing guarantees for these techniques are, 
either directly or indirectly, rooted in the study of suprema of random processes. 
However, this connection is often left implicit, and random-process tools are rarely 
invoked directly in the corresponding analyses.
For instance, \cite{cartis2023global} relies on statistical dimension, which is closely related to Gaussian width and generic chaining complexity, while \cite{liberti2021random} builds on the Johnson--Lindenstrauss lemma, which can be viewed as a special case of Gordon-type random embedding results rooted in generic chaining. These connections suggest that random projection methods in optimization can be naturally understood through the theory of generic chaining.

Using convex duality, we show that the projection error $\epsilon(k)$ can be expressed as the supremum of an appropriate random process, which can then be controlled using generic chaining. This viewpoint leads to shorter proofs and sharper guarantees. In particular, we generalize the result of \cite{vu2018random} and show that
$
    \OPT_{=} - \epsilon(k)
    \leq
    \ROPT_{=}(k)
    \leq
    \OPT_{=},
$
where
\begin{align*}
\epsilon(k)
\lesssim
\norm{\lambda_\star}_2
\left[
\frac{
\gamma_2(AS,|\cdot|_2)
+\operatorname{rad}(AS)
+\norm{\bv}_2
}{\sqrt{k}}
\right].
\end{align*}
Here, \(\gamma_p(\cdot)\) denotes the \(p\)-chaining functional, \(\lambda_\star\) is an optimal dual solution associated with \(A\xv=\bv\), and \(\operatorname{rad}(\cdot)\) denotes the \(\ell_2\)-radius of a set. We formally introduce the \(p\)-chaining functional in Section~\ref{sec:chaining}. Readers unfamiliar with generic chaining may simply view \(\gamma_p(AS,\|\cdot\|_2)\) as a geometric complexity measure of \(AS\), analogous to covering numbers or \(\epsilon\)-nets but more sensitive to the geometry of the underlying set.

This new viewpoint leads to several improvements. First, we extend existing random-projection results for linear objectives \cite{vu2018random,liberti2021random} to general convex objective functions. More fundamentally, the convex-duality perspective reveals that the complexity of the objective function itself plays little role in the projection error; instead, the behavior of the associated random process is governed primarily by the geometry of the feasible region.
When \(S\) is a scaled simplex, our bound matches that of \cite{vu2018random}. More importantly, our analysis removes the need for the feasible region to admit a polyhedral description and applies directly to general convex sets, in some cases yielding strictly sharper bounds. For example, suppose \(AS\) is the intersection of the unit \(\ell_2\)-ball with the positive orthant. A simplex-based argument requires including this set into a simplex scaled by a factor of order \(\sqrt{n}\). Under suitable normalization conditions, the analysis of \cite{vu2018random} then yields
$
    \epsilon(k)
    =
    O\!\left(
        \|\lambda_\star\|_2
        \sqrt{\frac{n\log n}{k}}
    \right).
$
In contrast, our chaining-based analysis gives
$
    \epsilon(k)
    =
    O\!\left(
        \|\lambda_\star\|_2
        \sqrt{\frac{n}{k}}
    \right),
$
since
$
    \gamma_2(AS,\|\cdot\|_2)
    =
    \Theta(\sqrt{n})
$
for this set. Thus, the chaining-based viewpoint not only extends the analysis beyond polyhedral feasible regions, but can also exploit their intrinsic geometry to obtain sharper projection guarantees.

On the other hand, we illustrate the power of the random-process perspective by
extending existing results for linear equality constraints to the setting of linear
inequality constraints. Consider the analogous convex program
\begin{equation}
    \label{prob_ineq}
    \begin{aligned}
        \OPT_{\leq} := \inf_{\xv} \quad & f(\xv) \\
        \text{s.t.} \quad
        & A\xv \leq \bv, \\
        & \xv \in S,
    \end{aligned}
\end{equation}
and its randomly projected counterpart
\begin{equation}
    \label{prob_ineq_proj}
    \begin{aligned}
        \ROPT_{\leq}(k) := \inf_{\xv} \quad & f(\xv) \\
        \text{s.t.} \quad
        & GA\xv \leq G\bv, \\
        & \xv \in S.
    \end{aligned}
\end{equation}
A fundamental difficulty arises immediately. Unlike in the equality-constrained
setting, an arbitrary random matrix \(G\) does not preserve feasibility. Indeed,
$
    A\xv \leq \bv
$
does not in general imply
$
    GA\xv \leq G\bv
$
when \(G\) contains negative entries. Consequently, the feasible regions of
\eqref{prob_ineq} and \eqref{prob_ineq_proj} are generally incomparable:
the projected problem is neither necessarily a relaxation nor a restriction of the
original problem. At first sight, there is therefore no reason to expect any
deterministic ordering between \(\ROPT_{\leq}(k)\) and \(\OPT_{\leq}\), and it is
unclear whether the classical random-projection framework can be meaningfully
extended to inequality-constrained optimization. 
 
For typical choice of $G$, that is, $G$ with each entry i.i.d distributed as $N(0,1/k)$, while random projection can not preserve primal feasible and therefore it can not preserve primal bound. We will show that it can preseves dual bound.
In particular, with high probability, $
    \OPT_{\leq}-\epsilon(k)
    \leq
    \ROPT_{\leq}(k),
$
where
\begin{align*}
\epsilon(k)
\lesssim
\left(\norm{\lambda_\star}_2+\norm{\lambda_\star}_1\right)
\left[
\frac{
\gamma_2(AS,\|\cdot\|_2)
+\norm{\bv}_2
}{\sqrt{k}}
+
\frac{
\gamma_1(AS,\|\cdot\|_2)
}{k}
\right].
\end{align*}

At first glance, the above result may appear counterintuitive. After all, the
projected problem is obtained by aggregating the inequality constraints with a
random matrix that may contain negative entries, which is neither a relaxation nor a restriction. How, then, can one still establish a meaningful comparison between
the projected and original problems?
In fact, the existing guarantee
$
\OPT_{=} - \epsilon(k)
    \leq
    \ROPT_{=}(k)
$
is rooted in the observation that random projection preserves the dual
structure, or more precisely, the KKT system. Once dual feasibility and
complementary slackness are satisfied, the KKT conditions do not distinguish between equality constraints and active inequality constraints.
Indeed, only the active inequality constraints are used to certify optimality, and they
effectively behave as equality constraints. This observation suggests that a
similar guarantee may also hold for inequality-constrained problems.
The main difficulty, however, is that random projection no longer preserves
dual feasibility in the inequality-constrained setting. To overcome this
difficulty, we exploit the sign-symmetric structure of the random matrix
\(G\) and introduce a new way of projecting the dual solution, which differs from existing approaches. See Section~\ref{subsec:random_proj_as_process} for details.

The two error bounds above have several interesting implications. Unlike the linear equality-constrained case, the error bound for inequality-constrained problems is sensitive to the complexity of the objective function. When the objective function is linear, the set \(AS\) appearing in the bounds can be replaced by its set of vertices \(V(AS)\), see Remark \ref{rem:lin_help} and Table \ref{tab:gamma_simplex_ball} . Although this replacement does not substantially improve the \(\gamma_2\)-functional, it can lead to an order-wise improvement in the \(\gamma_1\)-functional.
On the other hand, both chaining functionals satisfy a natural monotonicity property. Namely, if \(A\subseteq B\), then
$
\gamma_2(A,\norm{\cdot}_2)
\leq
\gamma_2(B,\norm{\cdot}_2)\text{ and }
\gamma_1(A,\norm{\cdot}_2)
\leq
\gamma_1(B,\norm{\cdot}_2).
$
Consequently, applying random projection to a larger feasible set may require a larger projection dimension to achieve the same error guarantee, even when the larger set admits a conceptually simple geometric description. 

\begin{table}[ht]
\centering
\caption{Orders of the chaining functionals for the Euclidean simplex,
the Euclidean ball, and their extreme points. Here
\(\Delta_n:=\operatorname{conv}\{0,e_1,\ldots,e_n\}\),
\(B_2^n:=\{x\in\mathbb{R}^n:\|x\|_2\leq1\}\), and
\(\asymp\) hides universal constants.}
\label{tab:gamma_simplex_ball}
\begin{tabular}{lccc}
\toprule
Set \(T\)
&
\(\gamma_2(T,\|\cdot\|_2)\)
&
\(\gamma_1(T,\|\cdot\|_2)\)
&
Reference
\\
\midrule

\(\displaystyle
\Delta_n
=
\operatorname{conv}\{0,e_1,\ldots,e_n\}
\)
&
\(\asymp \sqrt{\log n}\)
&
\(\asymp \sqrt{n}\)
&
\cite{talagrand2005generic} and Prop \ref{prop_gamma_1}
\\[1.5mm]

\(\displaystyle
V(\Delta_n)
=
\{0,e_1,\ldots,e_n\}
\)
&
\(\asymp \sqrt{\log n}\)
&
\(\asymp \log n\)
&
\cite{talagrand2005generic}
\\[1.5mm]

\(\displaystyle
B_2^n
=
\{x\in\mathbb{R}^n:\|x\|_2\leq1\}
\)
&
\(\asymp \sqrt{n}\)
&
\(\asymp n\)
&
\cite{talagrand2005generic,vershynin2018highdim}
\\[1.5mm]

\(\displaystyle
V(B_2^n)
=
S^{n-1}
\)
&
\(\asymp \sqrt{n}\)
&
\(\asymp n\)
&
\cite{talagrand2005generic,vershynin2018highdim}
\\
\bottomrule
\end{tabular}
\end{table}

The remainder of the paper is organized as follows. In
Section~\ref{sec:prelim}, we introduce the notation and background on random
processes, and explain how random projection can be formulated in terms of an
appropriate random process. In Section~\ref{sec:gurantee}, we derive
random-projection guarantees for equality- and inequality-constrained problems,
respectively. We conclude the paper with a brief discussion of the main results
and open questions.

\section{Preliminary results and notation}
\label{sec:prelim}

In this section, we introduce several notions and results from probability theory that will be used in later sections.

For a set \(T\subseteq\R^m\), define
$
    \operatorname{rad}(T)
    :=
    \sup_{\tv\in T}\|\tv\|_2
$
and
$
    \operatorname{diam}(T)
    :=
    \sup_{\tv,\sv\in T}\|\tv-\sv\|_2.
$
For a random variable \(X\), we use the standard Orlicz norms
$
\|X\|_{\psi_2}
:=
\inf\left\{
t>0:
\mathbb{E}\exp\left(\frac{X^2}{t^2}\right)\leq2
\right\}
$
and
$
\|X\|_{\psi_1}
:=
\inf\left\{
t>0:
\mathbb{E}\exp\left(\frac{|X|}{t}\right)\leq2
\right\}.
$ A random variable \(X\) is called \emph{sub-Gaussian} if
$
\|X\|_{\psi_2}<\infty,
$
and \emph{sub-exponential} if
$
\|X\|_{\psi_1}<\infty.
$
Throughout the paper, \(C,c>0\) denote universal constants whose values may
change from line to line. We write \(a\lesssim b\) if \(a\leq Cb\).

We also require several standard results from probability theory.

\begin{lem} \cite{vershynin2018highdim}
\label{lem:centering}
For every sub-Gaussian random variable \(X\),
$
    \|X-\mathbb{E}X\|_{\psi_2}
    \lesssim
    \|X\|_{\psi_2}.
$
\end{lem}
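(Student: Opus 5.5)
The plan is to use the triangle inequality for the Orlicz norm $\|\cdot\|_{\psi_2}$ and reduce everything to bounding the $\psi_2$ norm of the constant $\mathbb{E}X$. Writing $X-\mathbb{E}X$ as the sum of $X$ and the constant $-\mathbb{E}X$, we get
\[
\|X-\mathbb{E}X\|_{\psi_2}\leq \|X\|_{\psi_2}+\|\mathbb{E}X\|_{\psi_2}.
\]
This uses that $\|\cdot\|_{\psi_2}$ is a norm. That fact follows from the convexity of $u\mapsto e^{u^2}-1$: it makes the Luxemburg-type gauge defined in Section~\ref{sec:prelim} subadditive. I would quote it as standard or verify it in one line via Jensen.

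Next I compute the $\psi_2$ norm of a constant $a\in\R$. We need $\exp(a^2/t^2)\leq 2$, that is, $t\geq |a|/\sqrt{\log 2}$. Hence
\[
\|a\|_{\psi_2}=\frac{|a|}{\sqrt{\log 2}}.
\]
With $a=\mathbb{E}X$, it remains to show $|\mathbb{E}X|\lesssim\|X\|_{\psi_2}$.

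For this step, set $K=\|X\|_{\psi_2}$. By Jensen's inequality applied to the convex increasing function $u\mapsto e^{u^2}$ on $[0,\infty)$,
\[
\exp\!\left(\frac{(\mathbb{E}|X|)^2}{K^2}\right)\leq \mathbb{E}\exp\!\left(\frac{X^2}{K^2}\right)\leq 2.
\]
This gives $\mathbb{E}|X|\leq \sqrt{\log 2}\,K$, and therefore $|\mathbb{E}X|\leq\sqrt{\log 2}\,K$. One caveat: the infimum in the definition of the norm is attained by monotone convergence, so using $t=K$ itself is legitimate; alternatively, work with $t>K$ and let $t\downarrow K$. Combining the three displays yields
\[
\|X-\mathbb{E}X\|_{\psi_2}\leq K+K=2\|X\|_{\psi_2}.
\]

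The only mildly delicate point is justifying the triangle inequality for $\|\cdot\|_{\psi_2}$. Given $s,t$ with $\mathbb{E}\exp(X^2/s^2)\leq2$ and $\mathbb{E}\exp(Y^2/t^2)\leq2$, write
\[
\frac{X+Y}{s+t}=\frac{s}{s+t}\cdot\frac{X}{s}+\frac{t}{s+t}\cdot\frac{Y}{t},
\]
and apply convexity of $\phi(u)=e^{u^2}$ pointwise:
\[
\phi\!\left(\frac{X+Y}{s+t}\right)\leq \frac{s}{s+t}\,\phi\!\left(\frac{X}{s}\right)+\frac{t}{s+t}\,\phi\!\left(\frac{Y}{t}\right).
\]
Taking expectations bounds $\mathbb{E}\phi\bigl((X+Y)/(s+t)\bigr)$ by $\frac{s}{s+t}\cdot 2+\frac{t}{s+t}\cdot 2=2$. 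Hence $\|X+Y\|_{\psi_2}\leq s+t$, and taking infima over $s,t$ gives subadditivity.
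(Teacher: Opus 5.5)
Your argument is correct and matches the standard proof. The paper gives no proof of its own and cites \cite{vershynin2018highdim}, where the lemma is proved by the same three steps: the triangle inequality for $\|\cdot\|_{\psi_2}$, the bound $\|a\|_{\psi_2}\lesssim|a|$ for constants (the paper's Lemma~\ref{lem:constant_psi_2}), and $|\mathbb{E}X|\le\mathbb{E}|X|\lesssim\|X\|_{\psi_2}$. Your one deviation is getting the last inequality directly from Jensen rather than from the moment characterization of sub-Gaussian variables, which gives you the explicit constant $2$.
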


\begin{lem} \cite{vershynin2018highdim}
\label{lem:product}
Let \(X\) and \(Y\) be sub-Gaussian random variables, not necessarily
independent. Then \(XY\) is sub-exponential and
$
    \|XY\|_{\psi_1}
    \lesssim
    \|X\|_{\psi_2}\|Y\|_{\psi_2}.
$
\end{lem}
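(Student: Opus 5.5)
The plan is to normalize, reduce to exponential moments via the definitions of the Orlicz norms, and combine them with the elementary inequality $|xy|\le \tfrac12(x^2+y^2)$. Set $K_X:=\|X\|_{\psi_2}$ and $K_Y:=\|Y\|_{\psi_2}$. If either is zero, the corresponding variable vanishes almost surely, so $XY=0$ almost surely and there is nothing to prove. Otherwise, both $\|\cdot\|_{\psi_2}$ and $\|\cdot\|_{\psi_1}$ are positively homogeneous, so we may replace $X$ by $X/K_X$ and $Y$ by $Y/K_Y$. It then suffices to show $\|XY\|_{\psi_1}\le C$ under the normalization $\|X\|_{\psi_2}=\|Y\|_{\psi_2}=1$. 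By the definition of the infimum (and monotone convergence, to pass from $t\downarrow 1$ to $t=1$), this normalization gives
\[
\E\exp(X^2)\le 2 \quad\text{and}\quad \E\exp(Y^2)\le 2 .
\]

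The key step is to bound the exponential moment of $|XY|$ with constant $1$. Using $|XY|\le \tfrac12(X^2+Y^2)$ and then the AM--GM inequality $e^{a}e^{b}\le \tfrac12(e^{2a}+e^{2b})$ with $a=X^2/2$ and $b=Y^2/2$, I would write
\[
\E\exp(|XY|)\le \E\Bigl[\exp\bigl(\tfrac{X^2}{2}\bigr)\exp\bigl(\tfrac{Y^2}{2}\bigr)\Bigr]\le \tfrac12\bigl(\E\exp(X^2)+\E\exp(Y^2)\bigr)\le 2 .
\]
Thus $t=1$ is admissible in the definition of $\|XY\|_{\psi_1}$, so $\|XY\|_{\psi_1}\le 1$. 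Undoing the normalization yields $\|XY\|_{\psi_1}\le \|X\|_{\psi_2}\|Y\|_{\psi_2}$, which is the claim with constant $1$. Finiteness of this quantity shows in particular that $XY$ is sub-exponential.

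No independence is used anywhere: the only probabilistic input is linearity of expectation applied to a pointwise inequality. This is exactly why the lemma holds for dependent $X$ and $Y$. The only point requiring care is the normalization step, namely checking that the infimum in the $\psi_2$ definition is attained, or equivalently passing to the limit $t\downarrow K_X$. If one prefers to avoid this, one can instead normalize by $K_X+\delta$ and $K_Y+\delta$ and let $\delta\to 0$. I do not expect any genuine obstacle.
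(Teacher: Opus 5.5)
Your proof is correct and is essentially the standard argument from the cited reference, which the paper invokes without giving its own proof. That argument normalizes both $\psi_2$-norms to $1$, uses $|XY|\le \frac12(X^2+Y^2)$, and applies AM--GM to the exponentials to get $\E\exp(|XY|)\le 2$. Your separate treatment of the degenerate case $\|X\|_{\psi_2}=0$, and your monotone-convergence check that the infimum is attained, are both valid and make the normalization step fully rigorous.
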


\begin{lem}\cite{vershynin2018highdim}
\label{lem:constant_psi_2}
For every deterministic scalar \(a\),
$
    \|a\|_{\psi_2}
    \lesssim
    |a| \text{ and }
    \|a\|_{\psi_1}
    \lesssim
    |a|.
$
\end{lem}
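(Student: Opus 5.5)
The statement to prove is Lemma~\ref{lem:constant_psi_2}: for a deterministic scalar $a$, $\|a\|_{\psi_2}\lesssim |a|$ and $\|a\|_{\psi_1}\lesssim |a|$. The plan is to compute both Orlicz norms directly from their definitions. We view $a$ as the constant random variable $X\equiv a$, so the expectations in the definitions disappear.

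First we dispose of the trivial case $a=0$. Then $\mathbb{E}\exp(0)=1\le 2$ for every $t>0$, so both norms equal $0$ and the inequalities hold.

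For $a\neq 0$, the $\psi_2$ condition reads $\exp(a^2/t^2)\le 2$. This is equivalent to $a^2/t^2\le \log 2$, that is, $t\ge |a|/\sqrt{\log 2}$. The infimum over such $t$ is therefore exactly
\[
\|a\|_{\psi_2}=\frac{|a|}{\sqrt{\log 2}}.
\]
In the same way, the $\psi_1$ condition $\exp(|a|/t)\le 2$ is equivalent to $t\ge |a|/\log 2$, which gives
\[
\|a\|_{\psi_1}=\frac{|a|}{\log 2}.
\]
Both norms are thus equal to $|a|$ times a universal constant, namely $1/\sqrt{\log 2}$ and $1/\log 2$ respectively. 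These are absorbed into the $\lesssim$ notation, which proves the lemma. The two equivalences are exact because $\exp$ is strictly increasing.

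No step is a genuine obstacle. The only point needing care is that the constants are universal, independent of $a$, which the explicit computation confirms.
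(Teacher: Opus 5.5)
Your proof is correct: since $\exp$ is strictly increasing, the defining conditions reduce exactly to $t\ge |a|/\sqrt{\log 2}$ and $t\ge |a|/\log 2$, which gives the explicit values $\|a\|_{\psi_2}=|a|/\sqrt{\log 2}$ and $\|a\|_{\psi_1}=|a|/\log 2$ (natural logarithm). The paper gives no proof of its own and only cites \cite{vershynin2018highdim}; the standard argument there is this same direct computation, stated for bounded variables as $\|X\|_{\psi_2}\le \|X\|_\infty/\sqrt{\log 2}$, so your approach matches it.
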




\subsection{Random processes and generic chaining}
\label{sec:chaining}

In this subsection, we provide a brief introduction to random processes and generic chaining. 
A random process is a collection of random variables
$
    \{X_t : t \in T\}
$
indexed by a set \(T\), and it is called centered if
$
    \E[X_t] = 0, \forall t \in T.
$
Throughout this paper, we assume that \(T\) is contained in a normed vector space equipped with a metric \(d(\cdot,\cdot)\). 
The main object of interest is the expected supremum
$
    \E \sup_{t \in T} X_t.
$
Controlling such a supremum is more difficult than controlling a single random variable, since one must account not only for the tail behavior of each \(X_t\), but also for the dependence across the collection \(\{X_t\}_{t\in T}\).

Generic chaining provides a systematic way to control this expected supremum by exploiting the geometric complexity of the index set \(T\) under the (semi)-metric induced by the increments $
    X_t-X_s, s,t\in T.
$ of the process.
For example, a process is called a \emph{sub-Gaussian process} with respect to a metric \(d\) if
$
    \P\left(
        |X_t-X_s|
        \geq u\,d(t,s)
    \right)
    \leq
    2\exp(-c u^2),
$
for all \(s,t\in T\) and \(u\geq 0\), where \(c>0\) is a universal constant. 
Equivalently, this condition can be written as
$
    \|X_t-X_s\|_{\psi_2}
    \lesssim
    d(t,s),\forall s,t\in T.
$
Thus, points that are close under \(d\) correspond to random variables whose increments are small with high probability. 

A first way to exploit this geometry is through covering numbers. 
For \(\varepsilon>0\), an \(\varepsilon\)-net of \(T\) with respect to \(d\) is a finite set
\(T_\varepsilon\subseteq T\) such that, for every \(t\in T\), there exists some
\(s\in T_\varepsilon\) satisfying
$
    d(t,s)\leq \varepsilon.
$
The covering number
$
    N(T,d,\varepsilon)
$
is the minimum cardinality of such an \(\varepsilon\)-net. 
At a fixed scale \(\varepsilon\), one may approximate each point \(t\in T\) by a nearby point in an \(\varepsilon\)-net and then apply a union bound over the finitely many points in the net. 
Repeating this argument over multiple scales leads to entropy-integral bounds. 
In particular, for a sub-Gaussian process, Dudley's inequality \cite{dudley1967sizes} gives
\[
    \E\sup_{t\in T}|X_t-X_{t_0}|
    \lesssim
    \int_0
    \sqrt{\log N(T,d,\varepsilon)}
    \,d\varepsilon,
\]
for any fixed \(t_0\in T\).
Although Dudley's inequality is useful in many settings, it can be suboptimal because the covering sets at different scales are chosen and analyzed separately.

Generic chaining refines this multiscale argument by approximating every point of \(T\) simultaneously across a sequence of increasingly fine resolutions. 
An \emph{admissible sequence} of \(T\) is a sequence of finite subsets
$
    \mathcal T_0,\mathcal T_1,\ldots \subseteq T
$
satisfying
$
    |\mathcal T_0|=1,
    |\mathcal T_n|
    \leq
    2^{2^n},n\geq 1.
$
For an admissible sequence \(\{\mathcal T_n\}_{n\geq 0}\), define
$
    d(t,\mathcal T_n)
    :=
    \inf_{s\in\mathcal T_n} d(t,s),
$
and let
$
    \pi_n(t)\in \mathcal T_n
$
be a closest point to \(t\), so that
$
    d(t,\pi_n(t))
    =
    d(t,\mathcal T_n).
$
If the closest point is not unique, one may choose one arbitrarily.
The basic idea of chaining is to represent \(X_t\) through successive approximations of \(t\). 
Formally, assuming some regularity condition so that \(\pi_n(t)\to t\) as $n \to \infty$, one may write
\[
    X_t-X_{\pi_0(t)}
    =
    \sum_{n\geq 0}
    \left(
        X_{\pi_{n+1}(t)}
        -
        X_{\pi_n(t)}
    \right).
\]
Since \(|\mathcal T_0|=1\), the point \(\pi_0(t)\) is independent of \(t\). 
If the process is centered, then
$
    \E X_{\pi_0(t)}=0.
$
Consequently,
\begin{align*}
    \E\sup_{t\in T} X_t
    &\leq
    \sum_{n\geq 0}
    \E\sup_{t\in T}
    \left(
        X_{\pi_{n+1}(t)}
        -
        X_{\pi_n(t)}
    \right). 
\end{align*}
The increment at level \(n\) is controlled by the distance between two successive approximations, while the admissibility condition ensures that the number of possible increments at level \(n\) does not grow too quickly. 
This balance between the size of the increments and the number of possible increments is the main idea behind generic chaining.
For \(p\geq 1\), the corresponding chaining functional is defined as
\[
    \gamma_p(T,d)
    :=
    \inf_{\{\mathcal T_n\}}
    \sup_{t\in T}
    \sum_{n\geq 0}
    2^{n/p} d(t,\mathcal T_n),
\]
where the infimum is taken over all admissible sequences
\(\{\mathcal T_n\}_{n\geq 0}\). 
The functional \(\gamma_p(T,d)\) measures the geometric complexity of \(T\) simultaneously across all scales. 
In particular, \(\gamma_2\) is the natural complexity measure for processes with sub-Gaussian increments.

Indeed, if
\[
    \|X_t-X_s\|_{\psi_2}
    \lesssim
    d(t,s),
    \qquad s,t\in T,
\]
then generic chaining yields
\[
    \E\sup_{t\in T}
    |X_t-X_{t_0}|
    \lesssim
    \gamma_2(T,d)
\]

More generally, random processes whose increments can be represented as sums of independent centered sub-exponential random variables naturally give rise to mixed-tail behavior. Given two semimetrics \(d_2\) and \(d_1\) on \(T\), we say that a random process \(\{X_t\}_{t\in T}\) satisfies a \emph{mixed-tail increment condition} if
\begin{equation}
\label{eq:mixed_tail}
\P\left(
|X_t-X_s|
\geq
C\left[
\sqrt{u},d_2(t,s)
+
u,d_1(t,s)
\right]
\right)
\leq
2e^{-u},
\qquad
\forall s,t\in T,\ u\geq 1,
\end{equation}
for some universal constant \(C>0\).

A standard mechanism leading to \eqref{eq:mixed_tail} is Bernstein's inequality for sums of independent sub-exponential random variables. We first recall the following form of Bernstein's inequality.

\begin{lem}[Bernstein's inequality for sub-exponential random variables]
\label{lem:bernstein_subexponential}
Let \(X_1,\ldots,X_k\) be independent centered random variables satisfying
$
    \|X_i\|_{\psi_1}\le K,
    \qquad i=1,\ldots,k.
$
Then, for every \(u\ge 0\), with probability at least \(1-Ce^{-cu}\),
$
    \left|
    \frac{1}{k}\sum_{i=1}^k X_i
    \right|
    \le
    K
    \left(
    \sqrt{\frac{u}{k}}
    +
    \frac{u}{k}
    \right),
$
where \(C,c>0\) are universal constants.
\end{lem}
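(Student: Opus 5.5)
The plan is the standard route through the moment generating function: a Chernoff bound for the sum, followed by a symmetric argument for the lower tail. Write \(S_k=\sum_{i=1}^k X_i\).

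\textbf{Step 1 (MGF bound).} The sub-exponential assumption \(\|X_i\|_{\psi_1}\le K\) together with \(\E X_i=0\) gives the estimate
\[
\E\exp(\lambda X_i)\le \exp\bigl(C\lambda^2K^2\bigr)\qquad\text{for } |\lambda|\le \frac{c}{K}.
\]
To prove it, I would expand the exponential series. Centering removes the linear term. For the higher terms, the bound \(\E|X_i|^p\le (CKp)^p\), which follows from the definition of the \(\psi_1\)-norm, controls each moment. Summing the resulting geometric tail of the series, which converges for \(|\lambda|K\) small, yields the displayed inequality.

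\textbf{Step 2 (Chernoff for the sum).} Independence lets the MGF of the sum factor, so \(\E e^{\lambda S_k}\le \exp(Ck\lambda^2K^2)\) for \(|\lambda|\le c/K\). For any \(t\ge 0\), Markov's inequality then gives
\[
\P(S_k\ge t)\le \exp\bigl(-\lambda t + Ck\lambda^2K^2\bigr).
\]
Next I optimize \(\lambda\) over the admissible range. The unconstrained minimizer is \(\lambda=t/(2CkK^2)\). If this value lies in the range, the bound becomes \(e^{-t^2/(4CkK^2)}\). If it does not, I take the endpoint \(\lambda=c/K\), which gives \(e^{-c't/K}\). Combining the two cases,
\[
\P(S_k\ge t)\le \exp\Bigl(-c\min\Bigl(\frac{t^2}{kK^2},\frac{t}{K}\Bigr)\Bigr).
\]
Applying the same argument to \(-X_i\) handles the lower tail, and a union bound adds a factor of \(2\).

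\textbf{Step 3 (Reparametrize).} Set \(t=K(\sqrt{uk}+u)\). Then \(t^2/(kK^2)\ge u\), because \(t\ge K\sqrt{uk}\). Likewise \(t/K\ge u\), because \(t\ge Ku\). Substituting, the probability that \(|S_k|/k\) exceeds \(K(\sqrt{u/k}+u/k)\) is at most \(2e^{-cu}\), which is exactly the claimed statement.

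I expect the main technical step to be Step 1. There the centering must be used to kill the linear term, and the factorial growth of the moments must be matched against the \(\lambda^p\) factors to obtain a quadratic exponent on a restricted range of \(\lambda\). Everything after that is routine optimization. Alternatively, since the paper cites \cite{vershynin2018highdim} for its standard tools, one could simply invoke Bernstein's inequality there (Theorem 2.8.1), which gives the tail bound of Step 2 directly, and then perform only Step 3.
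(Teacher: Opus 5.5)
Your proposal is correct and follows the paper's route: the paper states this lemma without proof as a standard fact recalled from \cite{vershynin2018highdim}, and your Steps 1--2 are the textbook MGF/Chernoff proof of that result, while Step 3 is exactly the substitution \(t=K(\sqrt{uk}+u)\) that turns \(2\exp(-c\min(t^2/(kK^2),t/K))\) into the stated bound with \(C=2\). The only detail you leave implicit is at the endpoint \(\lambda=c/K\): there \(t>2CckK\), so the quadratic term \(Ckc^2\) is smaller than \(ct/(2K)\), and this is what yields the bound \(e^{-ct/(2K)}\).
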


As a direct consequence, sums of independent sub-exponential increments satisfy a mixed-tail increment condition.

\begin{lem}
\label{lem:subexp_mixed_tail}
Let \(\{X_t\}_{t\in T}\) be a random process indexed by
\(T\subseteq\R^n\). Suppose that, for every \(s,t\in T\),
$
    X_t-X_s
    =
    \sum_{i=1}^k Y_i(s,t),
$
where \(Y_1(s,t),\ldots,Y_k(s,t)\) are independent centered random variables
satisfying
$
    \|Y_i(s,t)\|_{\psi_1}
    \lesssim
    \frac{K}{k}\|t-s\|_2,
    i=1,\ldots,k.
$
Then, for every \(u\geq 1\),
$
\P\left(
    |X_t-X_s|
    >
    CK\|t-s\|_2
    \left[
        \sqrt{\frac{u}{k}}
        +
        \frac{u}{k}
    \right]
\right)
\leq
2e^{-u},
$
where \(C>0\) is a universal constant. Consequently,
\(\{X_t\}_{t\in T}\) satisfies the mixed-tail increment condition
\eqref{eq:mixed_tail} with
$
    d_2(t,s)
    =
    \frac{K}{\sqrt{k}}\|t-s\|_2
    \text{ and }
    d_1(t,s)
    =
    \frac{K}{k}\|t-s\|_2.
$

\end{lem}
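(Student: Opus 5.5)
Fix \(s,t\in T\) and write \(Y_i := Y_i(s,t)\). The plan is to rescale the summands so that Lemma~\ref{lem:bernstein_subexponential} applies directly, then read off the mixed-tail form. By hypothesis \(\|Y_i\|_{\psi_1}\le C_0\frac{K}{k}\|t-s\|_2\) for a universal \(C_0\). If \(t=s\), then each \(Y_i\) has zero \(\psi_1\)-norm, hence vanishes almost surely, so \(X_t-X_s=0\) and there is nothing to prove. Otherwise set
\[
Z_i := \frac{k\,Y_i}{C_0 K\|t-s\|_2},\qquad i=1,\ldots,k.
\]
The \(Z_i\) are independent and centered with \(\|Z_i\|_{\psi_1}\le 1\). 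Moreover
\[
X_t-X_s=\sum_{i=1}^k Y_i = C_0K\|t-s\|_2\cdot\frac1k\sum_{i=1}^k Z_i .
\]

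Applying Lemma~\ref{lem:bernstein_subexponential} with constant \(1\) gives, for every \(v\ge0\), with probability at least \(1-C_1e^{-c_1 v}\),
\[
\Bigl|\frac1k\sum_i Z_i\Bigr|\le \sqrt{\frac{v}{k}}+\frac{v}{k}.
\]
The only real work is converting the tail \(C_1e^{-c_1v}\) into exactly \(2e^{-u}\) for all \(u\ge1\); this is where the restriction \(u\ge1\) is used. I would take \(v=\alpha u\) with \(\alpha:=(1+\log\max(C_1,1))/c_1\), so that \(c_1 v\ge u+\log C_1\). Then
\[
C_1e^{-c_1v}\le e^{-u}\le 2e^{-u}.
\]
Since \(\alpha\ge1\) may be assumed, we also have
\[
\sqrt{v/k}+v/k\le \alpha\bigl(\sqrt{u/k}+u/k\bigr).
\]
Setting \(C:=C_0\alpha\) yields
\[
\P\Bigl(|X_t-X_s|>CK\|t-s\|_2\bigl[\sqrt{u/k}+u/k\bigr]\Bigr)\le 2e^{-u}.
\]

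For the consequence, note that
\[
K\|t-s\|_2\sqrt{u/k}=\sqrt{u}\,d_2(t,s),\qquad K\|t-s\|_2\,u/k=u\,d_1(t,s)
\]
with \(d_2(t,s)=\frac{K}{\sqrt k}\|t-s\|_2\) and \(d_1(t,s)=\frac{K}{k}\|t-s\|_2\). These are (semi)metrics, being scalar multiples of the Euclidean distance. The previous display is therefore precisely \eqref{eq:mixed_tail}.
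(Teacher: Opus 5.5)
Your proof is correct and takes the same route as the paper, which gives no separate argument and simply calls the lemma a direct consequence of Lemma~\ref{lem:bernstein_subexponential}; your rescaling to unit $\psi_1$-norm and the substitution $v=\alpha u$ (using $u\ge 1$ to absorb the constant $C_1$ into $2e^{-u}$) spell out the constant bookkeeping the paper leaves implicit. The only degenerate case you did not mention is $K=0$, which is trivial in exactly the same way as $t=s$.
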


Once a process satisfies \eqref{eq:mixed_tail}, its supremum can be controlled by generic chaining through the two corresponding geometric complexity measures
\(\gamma_2(T,d_2)\) and \(\gamma_1(T,d_1)\). We use the following mixed-tail generic chaining bound of \cite{dirksen2015tail}.

\begin{lem}[Generic chaining for mixed-tail processes
{\cite{dirksen2015tail}}]
\label{lem:mixed_tail_chaining}
Let \(\{X_t\}_{t\in T}\) be a random process satisfying the mixed-tail
increment condition \eqref{eq:mixed_tail} with respect to semimetrics
\(d_2\) and \(d_1\). Then, for every fixed \(t_0\in T\),

$$
    \E\sup_{t\in T}
    |X_t-X_{t_0}|
    \lesssim
    \gamma_2(T,d_2)
    +
    \gamma_1(T,d_1).
$$

Moreover, there exists a universal constant \(c>0\) such that, for every
\(u\geq 1\), with probability at least \(1-e^{-cu}\),

$$
\sup_{t\in T}
|X_t-X_{t_0}|
\lesssim
\gamma_2(T,d_2)
+
\gamma_1(T,d_1)
+
\sqrt{u}\,\operatorname{diam}(T,d_2)
+
u\,\operatorname{diam}(T,d_1).
$$

\end{lem}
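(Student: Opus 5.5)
The statement to prove is Lemma~\ref{lem:mixed_tail_chaining}. The tail bound in Lemma~\ref{lem:subexp_mixed_tail} is a two-line consequence of Bernstein; Lemma~\ref{lem:mixed_tail_chaining} needs a chaining argument, which I carry out in the style of Dirksen. Fix admissible sequences $\{\mathcal T_n^{(2)}\}$ and $\{\mathcal T_n^{(1)}\}$ that nearly attain $\gamma_2(T,d_2)$ and $\gamma_1(T,d_1)$. Merge them into one sequence $\mathcal T_n$ by taking all points of $\mathcal T_{n-1}^{(2)}\times\mathcal T_{n-1}^{(1)}$ (suitably realized in $T$); its cardinality is at most $2^{2^{n-1}}\cdot 2^{2^{n-1}}=2^{2^n}$, so it is admissible. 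It also satisfies
\[
d_2(t,\mathcal T_n)\lesssim d_2(t,\mathcal T^{(2)}_{n-1})
\quad\text{and}\quad
d_1(t,\mathcal T_n)\lesssim d_1(t,\mathcal T^{(1)}_{n-1}).
\]
To get both distances controlled at once, I would follow Dirksen and choose $\pi_n(t)$ via a nested partition rather than a nearest point. This index shift costs only constants in $\sum_n 2^{n/2}d_2$ and $\sum_n 2^n d_1$. Put $\pi_0(t)=t_0$, and by adding $t_0$ to $\mathcal T_0$ reduce to a sequence with $|\mathcal T_0|=1$.

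Next, telescope $X_t-X_{t_0}=\sum_{n\ge0}(X_{\pi_{n+1}(t)}-X_{\pi_n(t)})$; convergence for separable processes can be handled by reducing to finite $T$ and using monotone convergence. For each level $n$ and each pair $(\pi_{n+1}(t),\pi_n(t))$, apply \eqref{eq:mixed_tail} with $u$ replaced by $u2^{n}$. There are at most $2^{2^{n+2}}$ such pairs at level $n$. A union bound over all levels and pairs gives failure probability
\[
\sum_n 2^{2^{n+2}}\cdot 2e^{-u2^n}\le e^{-cu}
\]
for $u\ge C$. On the complement event, every increment is bounded by
\[
\sqrt{u}\,2^{n/2}d_2(\pi_{n+1}(t),\pi_n(t))+u\,2^n d_1(\pi_{n+1}(t),\pi_n(t)).
\]
By the triangle inequality, $d_i(\pi_{n+1}(t),\pi_n(t))\le d_i(t,\mathcal T_{n+1})+d_i(t,\mathcal T_n)$ when the distances come from a common chain. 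Summing over $n$ then bounds the supremum by $\sqrt{u}\,\gamma_2+u\,\gamma_1$.

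This bound has $u$ multiplying the $\gamma$'s, whereas the statement wants $\gamma_2+\gamma_1+\sqrt u\,\mathrm{diam}_2+u\,\mathrm{diam}_1$. The standard fix is to use $u2^n$ only for levels $n\ge n_0$ with $2^{n_0}\approx u$; beyond that level the $u$ factor is absorbed, since $u2^n\le 2^{2n}$ can be rebalanced by applying the tail with parameter $2^n+u$ instead. With that parameter, each increment is bounded by
\[
(2^{n/2}+\sqrt u)\,d_2+(2^n+u)\,d_1 .
\]
The $2^{n/2}$ and $2^n$ parts sum to the $\gamma$'s. The $\sqrt u$ and $u$ parts sum to $\sqrt u\sum_n d_2(\pi_{n+1},\pi_n)$ and its $d_1$ analogue, and these need to be controlled by a diameter. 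This is the main obstacle: the sum of consecutive distances along a chain is not automatically bounded by $\mathrm{diam}$. I would handle it as Dirksen does. The deviation term is split off by bounding the first link $X_{\pi_{n_0}(t)}-X_{t_0}$ directly by a union bound over $|\mathcal T_{n_0}|\le 2^{2^{n_0}}$ points with parameter $u$, which gives $\sqrt u\,\mathrm{diam}_2+u\,\mathrm{diam}_1$ plus a $\gamma$-type term. The remaining links at levels $n\ge n_0$ use parameter $2^n$, valid because $2^n\gtrsim u$ there, and sum to $\gamma_2+\gamma_1$.

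Finally, the expectation bound follows by integrating the tail estimate: $\E Z=\int_0^\infty\P(Z>s)\,ds$ with $s$ parametrized by $u$. The diameter terms integrate to $O(\mathrm{diam}_2+\mathrm{diam}_1)$. Since $\mathrm{diam}(T,d_2)\lesssim\gamma_2(T,d_2)$ (from $|\mathcal T_0|=1$ and the $n=0$ term) and $\mathrm{diam}(T,d_1)\lesssim\gamma_1(T,d_1)$, these are absorbed, giving $\E\sup_t|X_t-X_{t_0}|\lesssim\gamma_2(T,d_2)+\gamma_1(T,d_1)$.
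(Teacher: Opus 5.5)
The paper gives no proof of this lemma and simply imports it from Dirksen (2015). Your final argument is essentially Dirksen's proof and is correct: one chain from nested common refinements of the two index-shifted admissible partition sequences, split at the level $n_0$ with $2^{n_0}\asymp u$. The first links $X_{\pi_{n_0}(t)}-X_{t_0}$ (at most $2^{2^{n_0}}$ of them) get a union bound with parameter $\asymp u$. The links at levels $n\geq n_0$ get one with parameter $\asymp 2^n$. You then integrate the tail and use $\operatorname{diam}(T,d_i)\lesssim\gamma_i(T,d_i)$.

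Two details need tightening. First, the level-$n$ parameter must be $K2^n$ with $K$ a sufficiently large absolute constant, because taken literally $2^{2^{n+2}}e^{-2^n}$ is not summable. Second, the distance inequalities for your merged point sets do not by themselves give a single chain that is good for both $d_1$ and $d_2$. That is exactly what the nested partitions supply, via the standard equivalence between the point-based $\gamma_p$ used in the paper and its partition-based form.
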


We will also need a related bound for Gaussian linear processes. Unlike the preceding result, this estimate involves only the \(\gamma_2\)-functional, reflecting the purely sub-Gaussian nature of Gaussian increments. The following lemma is a direct consequence of standard Gaussian comparison together with Gaussian concentration. We include the proof in Appendix \ref{app_proofs}.

\begin{lem}
\label{lem:gaussian_linear_process}
Let \(T\subseteq\mathbb{R}^m\) be bounded and let
\(g\sim N(\mathbf 0,I_m)\). Let \(P:\mathbb{R}^m\to\mathbb{R}^m\) be a
linear contraction, i.e.,
$
    \|P\xv\|_2\leq \|\xv\|_2
    \qquad
    \text{for all } \xv\in\mathbb{R}^m.
$
Then, for every \(u\geq 1\), with probability at least
\(1-Ce^{-cu}\),

$$
    \sup_{\tv\in T}
    \left|
    \langle Pg,\tv\rangle
    \right|
    \lesssim
    \gamma_2(T,\|\cdot\|_2)
    +
    \operatorname{rad}(T)\sqrt{u},
$$

where \(c,C>0\) are universal constants.
\end{lem}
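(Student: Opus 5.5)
The plan is to split the bound into an expectation estimate and a deviation estimate. We write \(X_\tv := \langle Pg,\tv\rangle = \langle g, P^\top \tv\rangle\) and treat this as a centered Gaussian process indexed by \(T\).

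\textbf{Increment metric.} For \(\sv,\tv\in T\), the increment \(X_\tv - X_\sv = \langle g, P^\top(\tv-\sv)\rangle\) is Gaussian with standard deviation \(\|P^\top(\tv-\sv)\|_2\). Since \(P\) is a contraction, so is \(P^\top\) (the operator norm is invariant under transposition). Hence
\[
\|X_\tv-X_\sv\|_{\psi_2}\lesssim \|P^\top(\tv-\sv)\|_2\le \|\tv-\sv\|_2 .
\]
So the process is sub-Gaussian with respect to the Euclidean metric on \(T\).

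\textbf{Expectation bound.} Fix any \(\tv_0\in T\). The generic chaining bound recalled in Section~\ref{sec:chaining} gives
\[
\E\sup_{\tv\in T}|X_\tv-X_{\tv_0}|\lesssim\gamma_2(T,\|\cdot\|_2).
\]
In addition, \(\E|X_{\tv_0}|\le \|P^\top \tv_0\|_2\le \operatorname{rad}(T)\). Therefore
\[
\E\sup_{\tv\in T}|X_\tv|\lesssim \gamma_2(T,\|\cdot\|_2)+\operatorname{rad}(T).
\]
Alternatively, Sudakov--Fernique comparison with the canonical process \(\langle g,\tv\rangle\) yields the same bound, since its increments dominate those of \(X\). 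Majorizing measures then bound \(\E\sup_\tv\langle g,\tv\rangle\) by \(\gamma_2\).

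\textbf{Concentration.} The map \(g\mapsto \sup_{\tv\in T}|\langle g,P^\top\tv\rangle|\) is Lipschitz with constant \(\sup_{\tv\in T}\|P^\top\tv\|_2\le\operatorname{rad}(T)\), because it is a supremum of linear functionals with these norms. Gaussian concentration for Lipschitz functions then gives
\[
\P\Big(\sup_\tv|X_\tv|\ge \E\sup_\tv|X_\tv| + r\Big)\le e^{-r^2/(2\operatorname{rad}(T)^2)} .
\]
We take \(r=\operatorname{rad}(T)\sqrt{u}\). Since \(u\ge1\), the extra \(\operatorname{rad}(T)\) from the expectation bound is absorbed into \(\operatorname{rad}(T)\sqrt u\). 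This yields the claim with probability at least \(1-Ce^{-cu}\).

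\textbf{Main obstacle.} The only point needing care is reconciling the one-sided, \(\tv_0\)-relative chaining bound with the absolute supremum without \(\tv_0\). This is handled by the triangle inequality \(|X_\tv|\le|X_\tv-X_{\tv_0}|+|X_{\tv_0}|\), as in the expectation step. Boundedness of \(T\) ensures everything is finite. If measurability is an issue, we restrict to countable dense subsets.
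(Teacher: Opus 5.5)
Your proposal is correct and follows essentially the same route as the paper. You bound the canonical metric by $\|P^\top(\tv-\sv)\|_2\le\|\tv-\sv\|_2$, deduce $\E\sup_{\tv\in T}|X_\tv|\lesssim\gamma_2(T,\|\cdot\|_2)+\operatorname{rad}(T)$, apply Gaussian concentration to the $\operatorname{rad}(T)$-Lipschitz map $g\mapsto\sup_{\tv\in T}|\langle Pg,\tv\rangle|$, and absorb the extra $\operatorname{rad}(T)$ using $u\ge 1$. The only difference is cosmetic: the paper gets the expectation bound from a Gaussian comparison lemma (your stated alternative), while your main argument runs generic chaining directly and handles the base point with the triangle inequality, which is equally valid.
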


\subsection{Random projection as a random process}
\label{subsec:random_proj_as_process}
In this subsection, we formulate the random projection error as the supremum
of a stochastic process.
We begin with the classical random projection approach for equality-constrained
problems, whose analysis is based on the Johnson--Lindenstrauss (JL) lemma \cite{matouvsek2013lecture}.
We will show how generic chaining generalizes arguments based on the JL lemma
and can be applied to difficult cases, i.e., non-polyhedral sets or inequality
cases.

The JL lemma is a fundamental result in dimension reduction, stating that for any finite set $H\subseteq\mathbb{R}^n$ and any $\epsilon\in(0,1)$, there exists a projection dimension $k=O(\epsilon^{-2}\log |H|)$ such that an appropriately scaled Gaussian random matrix $G\in\mathbb{R}^{k\times n}$ satisfies, with high probability,
$
(1-\epsilon)\|\xv\|_2
\le
\|G\xv\|_2
\le
(1+\epsilon)\|\xv\|_2,
\forall\,\xv\in H.
$
Since the Euclidean inner product can be recovered from the norm through the polarization identity, the JL lemma further implies approximate preservation of inner products. In particular, if all vectors in the finite sets $H_1,H_2\subseteq\mathbb{R}^n$ are normalized, then, with high probability,
$
\left|
\iprod{G\xv}{G\yv}
-
\iprod{\xv}{\yv}
\right|
\le
\epsilon,
\forall\,\xv\in H_1,\ \yv\in H_2.
$

At the heart of random projection methods is the approximate preservation of bilinear forms.
Under suitable regularity conditions, the optimality of a mathematical program is certified by its Karush--Kuhn--Tucker (KKT) system, whose defining relations consist of bilinear equalities between primal and dual variables. Consequently, random projection approximately preserves these bilinear relations and hence the associated optimality certificates.

We now illustrate this principle for the equality-constrained problem \eqref{prob_eq}. Let $\lambda_{\star}$ be an optimal dual solution corresponding to the equality constraint $A\xv=\bv$. Assuming strong duality, we have
\begin{align*}
\OPT_{=}
=
\inf_{\xv}
\quad &
f(\xv)
+
\iprod{\lambda_{\star}}{A\xv-\bv}
\\
\text{s.t.}\quad
&
\xv\in S.
\end{align*}
For the projected problem, choosing $G\lambda_{\star}$ as a dual certificate and applying weak duality yields
\begin{align*}
\ROPT_{=}
\ge
\inf_{\xv}
\quad &
f(\xv)
+
\iprod{G\lambda_{\star}}{GA\xv-G\bv}
\\
\text{s.t.}\quad
&
\xv\in S.
\end{align*}
Since the JL lemma approximately preserves bilinear forms, with high probability,
$
f(\xv)
+
\iprod{G\lambda_{\star}}{GA\xv-G\bv}
\approx
f(\xv)
+
\iprod{\lambda_{\star}}{A\xv-\bv},
\forall\,\xv\in S.
$
Combined with the trivial inequality $\ROPT_{=}\le\OPT_{=}$, this yields the approximation
$
\ROPT_{=}\approx\OPT_{=}.
$
Accordingly, the projection error is characterized by the maximum error between the projected and original bilinear forms:
$$
\epsilon(k)
\le
\sup_{\xv\in S}
\iprod{\lambda_{\star}}{A\xv-\bv} -
\iprod{G\lambda_{\star}}{GA\xv-G\bv}.
$$
where the right-hand side naturally fits into the framework of random processes and can be analyzed using generic chaining.

When extending this idea to inequality-constrained problems, the main difficulty is that a random projection does not preserve dual feasibility. Specifically,
$
\lambda\ge0
\;\not\Rightarrow\;
G\lambda\ge0.
$
Our key observation is that the JL lemma itself is not essential for preserving
bilinear structures. The only property required by the preceding argument is
the unbiasedness identity
$
\mathbb{E}(G^{\top}G)=I_n,
$
which implies
$
\mathbb{E}\!\left[
\iprod{G\lambda_{\star}}{GA\xv-G\bv}
\right]
=
\mathbb{E}\!\left[
\iprod{G^{\top}G\lambda_{\star}}{A\xv-\bv}
\right]
=
\iprod{\lambda_{\star}}{A\xv-\bv}.
$
Therefore, there is no need to project the dual multiplier as
$G\lambda_{\star}$, as suggested by the classical JL framework. Instead, we
project the dual multiplier as $2G^{+}\lambda_{\star}$, where $G^{+}$ denotes
the entrywise positive part of $G$. This projection preserves dual feasibility
while simultaneously satisfying
$
\mathbb{E}\!\left[
2(G^{+})^{\top}G
\right]
=
I_n,
$
thereby preserving the desired bilinear structure in expectation.

The trade-off is that the random matrix $(G^{+})^{\top}G$ exhibits weaker concentration than the classical covariance matrix
$G^{\top}G$. We therefore analyze $(G^{+})^{\top}G$ as a product of
sub-Gaussian random variables rather than as a quadratic form of Guassian. This weaker
concentration behavior is precisely what introduces the additional
$\gamma_1$ functional into the final projection error bound.

\section{Guarantees of random projection}

\label{sec:gurantee}

Throughout this section, we assume that strong duality holds. That is, there
exists a dual feasible multiplier \(\lambda_\star\) such that
\[
\OPT_{=} \;(\text{respectively, } \OPT_{\leq})
=
\inf_{\xv\in S}
\left\{
f(\xv)
+
\iprod{\lambda_\star}{A\xv-\bv}
\right\}.
\]
Notice that, in the inequality-constrained case, dual feasibility requires
$
\lambda_\star\geq \mathbf{0}.
$
In contrast, in the equality-constrained case, \(\lambda_\star\) is unrestricted
in sign. 

\subsection{Preservation of dual bounds in equality-constrained cases}

In this setting, we prove that random projection preserves dual bounds in equality-constrained problems. Since preservation of the primal bound follows automatically, this yields an overall guarantee for the equality-constrained case. We first establish the preservation of the dual certificate using a Gaussian decomposition and a Gaussian comparison argument.

\begin{lem}
\label{lem:prev_dual_eq}
Fix any \(\lambda\in\mathbb{R}^m\) and \(T\subseteq\mathbb{R}^m\). Let
\(G\in\mathbb{R}^{k\times m}\) have i.i.d.\ entries distributed as
\(N(0,1/k)\). Then, for every \(u\geq 1\), with probability at least
\(1-Ce^{-cu}\) for some universal constants \(c,C>0\),
\begin{align*}
\sup_{\tv\in T}
\left|
\left\langle
(G^\top G-I_m)\lambda,\tv
\right\rangle
\right|
\lesssim
\|\lambda\|_2
\left(
\frac{1}{\sqrt{k}}
+
\frac{\sqrt{u}}{k}
\right)
\left[
\gamma_2(T,\|\cdot\|_2)
+
\operatorname{rad}(T)\sqrt{u}
\right].
\end{align*}
\end{lem}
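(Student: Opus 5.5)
We will reduce the statement to Lemma~\ref{lem:gaussian_linear_process} by decomposing the Gaussian matrix along the fixed direction \(\lambda\). By homogeneity we may assume \(\|\lambda\|_2=1\); the case \(\lambda=0\) is trivial. Write \(G=\frac{1}{\sqrt{k}}W\), where \(W\) has i.i.d.\ \(N(0,1)\) entries. Set \(g:=W\lambda\sim N(\mathbf 0,I_k)\) and write \(W=g\lambda^\top+W(I_m-\lambda\lambda^\top)\). Gaussian rotation invariance then gives that \(g\) is independent of \(W':=W(I_m-\lambda\lambda^\top)\). With this decomposition,
\[
G^\top G\lambda=\frac1k W^\top g=\frac{\|g\|_2^2}{k}\lambda+\frac1k (I_m-\lambda\lambda^\top)W^\top g .
\]
Therefore
\[
\left\langle (G^\top G-I_m)\lambda,\tv\right\rangle=\Big(\frac{\|g\|_2^2}{k}-1\Big)\langle\lambda,\tv\rangle+\frac{\|g\|_2}{k}\Big\langle (I_m-\lambda\lambda^\top)h,\tv\Big\rangle,
\]
where \(h:=W^\top g/\|g\|_2\). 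Conditionally on \(g\), the vector \(h\) is distributed as \(N(\mathbf 0,I_m)\) in the directions orthogonal to \(\lambda\). Concretely, we take \(h\sim N(\mathbf 0,I_m)\) independent of \(g\), since the \(\lambda\)-component of \(h\) is killed by the projection \(P:=I_m-\lambda\lambda^\top\), which is a linear contraction.

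We then bound the two terms separately. For the first term, \(\sup_{\tv\in T}|\langle\lambda,\tv\rangle|\le\operatorname{rad}(T)\). By Bernstein's inequality (Lemma~\ref{lem:bernstein_subexponential} applied to \(g_i^2-1\), which have bounded \(\psi_1\) norm by Lemmas~\ref{lem:product} and \ref{lem:centering}), with probability at least \(1-Ce^{-cu}\),
\[
\Big|\frac{\|g\|_2^2}{k}-1\Big|\lesssim \sqrt{\frac uk}+\frac uk .
\]
Since \(u\ge1\), both \(\sqrt{u/k}=\frac{1}{\sqrt k}\sqrt u\) and \(\frac uk=\frac{\sqrt u}{k}\sqrt u\) are bounded by \(\big(\frac1{\sqrt k}+\frac{\sqrt u}{k}\big)\sqrt u\). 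Hence the first term is at most a constant times \(\big(\frac1{\sqrt k}+\frac{\sqrt u}{k}\big)\operatorname{rad}(T)\sqrt u\), which fits inside the claimed bound.

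For the second term, the same Bernstein event gives \(\|g\|_2\lesssim\sqrt k+\sqrt u\), so that
\[
\frac{\|g\|_2}{k}\lesssim \frac1{\sqrt k}+\frac{\sqrt u}{k}.
\]
Lemma~\ref{lem:gaussian_linear_process}, applied with the contraction \(P\) to the Gaussian \(h\) and conditioned on \(g\) (legitimate by independence), shows that with probability at least \(1-Ce^{-cu}\),
\[
\sup_{\tv\in T}|\langle Ph,\tv\rangle|\lesssim\gamma_2(T,\|\cdot\|_2)+\operatorname{rad}(T)\sqrt u .
\]
A union bound over the two events, followed by rescaling by \(\|\lambda\|_2\), yields the lemma.

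The main technical point is justifying the decomposition rigorously. One must check that \(W^\top g\) splits as \(\|g\|_2^2\lambda\) plus \(W'^\top g\), and that, conditionally on \(g\), the vector \(W'^\top g/\|g\|_2\) equals \(Ph\) in distribution for a standard Gaussian \(h\) independent of \(g\). This holds because \(W'\) is independent of \(g\) and \(W'^\top v\sim N(0,P)\) for every fixed unit vector \(v\). Once this is in place, the rest is routine, requiring only the elementary inequalities in \(u\ge1\).
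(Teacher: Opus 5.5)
Your proposal is correct and follows essentially the same route as the paper. You split $G$ along $\lambda/\|\lambda\|_2$, control $\|g\|_2^2/k-1$ (the paper's $a$) and $\|g\|_2/k$ by Bernstein, write the orthogonal part conditionally on $g$ as $\frac{\|g\|_2}{k}Ph$ with $h\sim N(\mathbf 0,I_m)$, and finish with Lemma~\ref{lem:gaussian_linear_process} and a union bound. The differences are cosmetic: you use the matrix form $W=g\lambda^\top+WP$ instead of working row by row, and one Bernstein event for both factors. Also, Lemma~\ref{lem:centering} is stated only for $\psi_2$, so $\|g_i^2-1\|_{\psi_1}\lesssim1$ should instead come from Lemmas~\ref{lem:product} and \ref{lem:constant_psi_2} plus the triangle inequality; this does not affect the argument.
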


\begin{proof}
The case \(\lambda=0\) is immediate, so assume that
\(\lambda\neq 0\), and set
$
q:=\frac{\lambda}{\|\lambda\|_2}.
$
Write
$
G=\frac{1}{\sqrt{k}}\Phi,
$
where the rows of \(\Phi\) are
\(\phi_1^\top,\ldots,\phi_k^\top\), with
\(\phi_i\overset{\mathrm{i.i.d.}}{\sim}N(\mathbf 0,I_m)\).

For each \(i\), decompose \(\phi_i\) into its component along \(q\) and its
orthogonal component:
$
\phi_i=\gamma_i q+\zeta_i,
$
where
$
\gamma_i:=\langle \phi_i,q\rangle\sim N(0,1), 
\zeta_i:=P_{q^\perp}\phi_i.
$
By Gaussian orthogonal decomposition, \(\gamma_i\) and \(\zeta_i\) are
independent, and
$
\zeta_i\sim N(0,P_{q^\perp}).
$
Moreover, the pairs \((\gamma_i,\zeta_i)\) are independent across \(i\).

For any \(\tv\in T\),
\[
\begin{aligned}
\left\langle
(G^\top G-I_m)\lambda,\tv
\right\rangle
&=
\|\lambda\|_2
\left[
\frac1k\sum_{i=1}^k
\langle \phi_i,q\rangle
\langle \phi_i,\tv\rangle
-
\langle q,\tv\rangle
\right] \\
&=
\|\lambda\|_2
\left[
\frac1k\sum_{i=1}^k
\gamma_i
\langle \gamma_i q+\zeta_i,\tv\rangle
-
\langle q,\tv\rangle
\right] \\
&=
\|\lambda\|_2
\left[
a\langle q,\tv\rangle
+
\langle \ell,\tv\rangle
\right],
\end{aligned}
\]
where
$
a:=
\frac1k\sum_{i=1}^k(\gamma_i^2-1)
$
and
$
\ell:=
\frac1k\sum_{i=1}^k\gamma_i\zeta_i.
$

Consequently,
\[
\sup_{\tv\in T}
\left|
\left\langle
(G^\top G-I_m)\lambda,\tv
\right\rangle
\right|
\leq
\|\lambda\|_2
\left[
|a|\operatorname{rad}(T)
+
\sup_{\tv\in T}|\langle \ell,\tv\rangle|
\right].
\tag{2}
\label{eq:decomp_dual_cert}
\]

We control the two terms separately. Since
\(\gamma_i^2-1\) are independent centered sub-exponential random variables
with uniformly bounded \(\psi_1\)-norm, Bernstein's inequality
gives, for every \(u\geq1\),
$
|a|
\lesssim
\sqrt{\frac{u}{k}}+\frac{u}{k}
$
with probability at least \(1-2e^{-cu}\). Therefore,
\[
|a|\operatorname{rad}(T)
\lesssim
\operatorname{rad}(T)
\left(
\sqrt{\frac{u}{k}}
+
\frac{u}{k}
\right).
\tag{3}
\label{eq:a_general_bound}
\]

We next control \(\ell\). Conditional on
\(\gamma:=(\gamma_1,\ldots,\gamma_k)\), the vector
\(\sum_{i=1}^k\gamma_i\zeta_i\) is centered Gaussian with covariance
$
\left(\sum_{i=1}^k\gamma_i^2\right)P_{q^\perp}.
$
Hence, conditionally on \(\gamma\), we have
$
\ell
\stackrel{d}{=}
\frac{\|\gamma\|_2}{k}P_{q^\perp}g,
g\sim N(\mathbf 0,I_m).
$

To control \(\|\gamma\|_2\), we again apply Bernstein's inequality to the
independent centered sub-exponential random variables
\(\gamma_i^2-1\). With probability at least \(1-2e^{-cu}\),
\[
\left|
\sum_{i=1}^k(\gamma_i^2-1)
\right|
\lesssim
\sqrt{ku}+u.
\]
Therefore,
\[
\|\gamma\|_2^2
=
\sum_{i=1}^k\gamma_i^2
\lesssim
k+\sqrt{ku}+u
\lesssim
k+u,
\]
and hence
\[
\|\gamma\|_2
\lesssim
\sqrt{k}+\sqrt{u}.
\]
It follows that
\[
\frac{\|\gamma\|_2}{k}
\lesssim
\frac1{\sqrt{k}}
+
\frac{\sqrt{u}}{k}.
\]
Therefore, on this event,
\[
\sup_{\tv\in T}|\langle\ell,\tv\rangle|
\lesssim
\left(
\frac1{\sqrt{k}}
+
\frac{\sqrt{u}}{k}
\right)
\sup_{\tv\in T}
|\langle P_{q^\perp}g,\tv\rangle|.
\tag{4}
\label{eq:ell_process_bound}
\]

Consider the centered Gaussian process
\[
X_{\tv}:=\langle P_{q^\perp}g,\tv\rangle,
\qquad \tv\in T.
\]
Its canonical metric satisfies
\[
\begin{aligned}
d_X(\tv,\sv)
&:=
\left(
\mathbb E|X_{\tv}-X_{\sv}|^2
\right)^{1/2} \\
&=
\|P_{q^\perp}(\tv-\sv)\|_2
\leq
\|\tv-\sv\|_2.
\end{aligned}
\tag{5}
\]
Thus, by Lemma \ref{lem:gaussian_linear_process}, with probability at least \(1-Ce^{-cu}\),
\[
\sup_{\tv\in T}
|\langle P_{q^\perp}g,\tv\rangle|
\lesssim
\gamma_2(T,\|\cdot\|_2)
+
\operatorname{rad}(T)\sqrt{u}.
\tag{6}
\]
Combining this with \eqref{eq:ell_process_bound} yields
\[
\sup_{\tv\in T}|\langle\ell,\tv\rangle|
\lesssim
\left(
\frac1{\sqrt{k}}
+
\frac{\sqrt{u}}{k}
\right)
\left[
\gamma_2(T,\|\cdot\|_2)
+
\operatorname{rad}(T)\sqrt{u}
\right]
\tag{7}
\]
with probability at least \(1-Ce^{-cu}\).

Finally, note that
\[
\operatorname{rad}(T)
\left(
\sqrt{\frac{u}{k}}
+
\frac{u}{k}
\right)
\leq
\left(
\frac1{\sqrt{k}}
+
\frac{\sqrt{u}}{k}
\right)
\operatorname{rad}(T)\sqrt{u}.
\]
Thus, substituting \eqref{eq:a_general_bound} and \eqref{eq:ell_process_bound}
into \eqref{eq:decomp_dual_cert}, and taking a union bound over the preceding
events, we obtain
\[
\sup_{\tv\in T}
\left|
\left\langle
(G^\top G-I_m)\lambda,\tv
\right\rangle
\right|
\lesssim
\|\lambda\|_2
\left(
\frac1{\sqrt{k}}
+
\frac{\sqrt{u}}{k}
\right)
\left[
\gamma_2(T,\|\cdot\|_2)
+
\operatorname{rad}(T)\sqrt{u}
\right]
\]
with probability at least \(1-Ce^{-cu}\), for some universal constants
\(C,c>0\). This completes the proof.
\end{proof}

\begin{thm}
Let
\(G\in\mathbb{R}^{k\times m}\) have i.i.d.\ entries distributed as
\(N(0,1/k)\). Suppose that strong duality holds for the equality-constrained
problem and let \(\lambda_\star\in\mathbb{R}^m\) be an optimal dual
multiplier. Then, for every \(u\geq 1\), with probability at least
\(1-Ce^{-cu}\),
$
\OPT_{=}-\epsilon(k,u)
\leq
\ROPT_{=}(k)
\leq
\OPT_{=},
$
where
$$
\epsilon(k,u)
\lesssim
\|\lambda_\star\|_2
\left(
\frac{1}{\sqrt{k}}
+
\frac{\sqrt{u}}{k}
\right)
\left[
\gamma_2(AS,\|\cdot\|_2)
+
\bigl(
\operatorname{rad}(AS)+\|\bv\|_2
\bigr)\sqrt{u}
\right],
$$
and \(c,C>0\) are universal constants.
\end{thm}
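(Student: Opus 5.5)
The upper bound $\ROPT_{=}(k)\leq\OPT_{=}$ is immediate. Every $\xv\in S$ with $A\xv=\bv$ satisfies $GA\xv=G\bv$, so the projected problem is a relaxation of \eqref{prob_eq}. All the work is therefore in the lower bound. There I will follow the weak-duality argument sketched in Section~\ref{subsec:random_proj_as_process}, using $\mu:=G\lambda_\star\in\R^k$ as the dual certificate for the projected constraints. For every $\xv$ feasible for \eqref{prob_eq_proj}, the term $\iprod{\mu}{GA\xv-G\bv}$ vanishes. Hence
\[
\ROPT_{=}(k)\geq\inf_{\xv\in S}\left\{f(\xv)+\iprod{G\lambda_\star}{GA\xv-G\bv}\right\}
=\inf_{\xv\in S}\left\{f(\xv)+\iprod{G^\top G\lambda_\star}{A\xv-\bv}\right\}.
\]

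Next I compare this with the strong-duality representation $\OPT_{=}=\inf_{\xv\in S}\{f(\xv)+\iprod{\lambda_\star}{A\xv-\bv}\}$. The two objectives differ pointwise by $\iprod{(G^\top G-I_m)\lambda_\star}{A\xv-\bv}$. Using $\inf(F+E)\geq\inf F-\sup|E|$, this gives
\[
\ROPT_{=}(k)\geq\OPT_{=}-\sup_{\tv\in T}\left|\iprod{(G^\top G-I_m)\lambda_\star}{\tv}\right|,\qquad T:=AS-\bv=\{A\xv-\bv:\xv\in S\}.
\]
It remains to bound this supremum with high probability. For that I apply Lemma~\ref{lem:prev_dual_eq} with $\lambda=\lambda_\star$ and the index set $T=AS-\bv$. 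This yields, with probability at least $1-Ce^{-cu}$, the bound $\|\lambda_\star\|_2\bigl(k^{-1/2}+\sqrt{u}/k\bigr)\bigl[\gamma_2(AS-\bv,\|\cdot\|_2)+\operatorname{rad}(AS-\bv)\sqrt{u}\bigr]$.

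The final step is to convert the geometry of $AS-\bv$ into that of $AS$. The radius is easy: $\operatorname{rad}(AS-\bv)\leq\operatorname{rad}(AS)+\|\bv\|_2$ by the triangle inequality. For $\gamma_2$, the functional is translation invariant under the Euclidean metric. Indeed, an admissible sequence $\{\mathcal T_n\}$ for $AS$ maps to the admissible sequence $\{\mathcal T_n-\bv\}$ for $AS-\bv$, and all distances $d(\tv,\mathcal T_n)$ are unchanged, so $\gamma_2(AS-\bv,\|\cdot\|_2)=\gamma_2(AS,\|\cdot\|_2)$. Substituting these two facts gives exactly the stated $\epsilon(k,u)$.

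I do not expect a serious obstacle, since the probabilistic content is already contained in Lemma~\ref{lem:prev_dual_eq}. The points needing care are technical. First, $T$ may be unbounded if $S$ is; in that case the right-hand side is infinite and the claim is vacuous, so only bounded $AS$ matters. Second, the infimum over $S$ may be $-\infty$, which is harmless because the inequality $\inf(F+E)\geq\inf F-\sup|E|$ still holds. Third, the supremum over a possibly uncountable $T$ must be measurable, which I would handle by the usual reduction to countable dense subsets, exactly as in the lemma.
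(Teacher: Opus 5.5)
Your proposal is correct and follows the paper's proof essentially step for step. The steps match: the relaxation argument for $\ROPT_{=}(k)\leq\OPT_{=}$, weak duality with the certificate $G\lambda_\star$, the add-and-subtract comparison with the Lagrangian at $\lambda_\star$, Lemma~\ref{lem:prev_dual_eq} applied to $T=AS-\bv$, and then translation invariance of $\gamma_2$ together with $\operatorname{rad}(AS-\bv)\leq\operatorname{rad}(AS)+\|\bv\|_2$. Your explicit justification of translation invariance and your remarks on unbounded $AS$ and infinite infima simply make explicit what the paper leaves implicit.
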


\begin{proof}
The primal bound
\(\ROPT_{=}(k)\leq \OPT_{=}\)
follows directly from feasibility preservation. We focus on the dual side.
By strong duality, there exists an optimal dual
multiplier \(\lambda_\star\in\mathbb{R}^m\) such that
$
\OPT_{=}
=
\inf_{\xv\in S}
\left\{
f(\xv)
+
\left\langle
\lambda_\star,A\xv-\bv
\right\rangle
\right\}.
$
The randomly projected problem is
$
\ROPT_{=}(k)
=
\inf_{\xv\in S}
\left\{
f(\xv):
GA\xv=G\bv
\right\}.
$

We choose \(G\lambda_\star\) as a candidate dual solution for
\(\ROPT_{=}(k)\). By weak duality,
$$
\begin{aligned}
\ROPT_{=}(k)
&\geq
\inf_{\xv\in S}
\left\{
f(\xv)
+
\left\langle
G\lambda_\star,G(A\xv-\bv)
\right\rangle
\right\}
\\
&=
\inf_{\xv\in S}
\left\{
f(\xv)
+
\left\langle
G^\top G\lambda_\star,A\xv-\bv
\right\rangle
\right\}.
\end{aligned}
$$

Adding and subtracting \(\lambda_\star\), we obtain
$$
\begin{aligned}
\ROPT_{=}(k)
&\geq
\inf_{\xv\in S}
\Bigg\{
f(\xv)
+
\left\langle
\lambda_\star,A\xv-\bv
\right\rangle
+
\left\langle
(G^\top G-I_m)\lambda_\star,
A\xv-\bv
\right\rangle
\Bigg\}
\\
&\geq
\inf_{\xv\in S}
\left\{
f(\xv)
+
\left\langle
\lambda_\star,A\xv-\bv
\right\rangle
\right\}
-
\sup_{\xv\in S}
\left|
\left\langle
(G^\top G-I_m)\lambda_\star,
A\xv-\bv
\right\rangle
\right|
\\
&=
\OPT_{=}
-
\sup_{\xv\in S}
\left|
\left\langle
(G^\top G-I_m)\lambda_\star,
A\xv-\bv
\right\rangle
\right|.
\end{aligned}
$$

We now apply Lemma~\ref{lem:prev_dual_eq} with
$
T:=AS-\bv
=
\{A\xv-\bv:\xv\in S\}.
$
For every \(u\geq1\), with probability at least
\(1-Ce^{-cu}\),
\begin{equation}
\label{eq:form_of_eps}
\begin{aligned}
&
\sup_{\xv\in S}
\left|
\left\langle
(G^\top G-I_m)\lambda_\star,
A\xv-\bv
\right\rangle
\right|
\\
&\qquad\lesssim
\|\lambda_\star\|_2
\left(
\frac{1}{\sqrt{k}}
+
\frac{\sqrt{u}}{k}
\right)
\left[
\gamma_2(AS-\bv,\|\cdot\|_2)
+
\operatorname{rad}(AS-\bv)\sqrt{u}
\right].
\end{aligned}
\end{equation}

Since the \(\gamma_2\)-functional is translation invariant,
$
\gamma_2(AS-\bv,\|\cdot\|_2)
=
\gamma_2(AS,\|\cdot\|_2).
$
Furthermore, by the triangle inequality,
$
\operatorname{rad}(AS-\bv)
=
\sup_{\xv\in S}\|A\xv-\bv\|_2
\leq
\sup_{\xv\in S}\|A\xv\|_2+\|\bv\|_2
=
\operatorname{rad}(AS)+\|\bv\|_2.
$
Substituting these bounds into \eqref{eq:form_of_eps}, we obtain
$$
\sup_{\xv\in S}
\left|
\left\langle
(G^\top G-I_m)\lambda_\star,
A\xv-\bv
\right\rangle
\right|
\lesssim
\|\lambda_\star\|_2
\left(
\frac{1}{\sqrt{k}}
+
\frac{\sqrt{u}}{k}
\right)
\left[
\gamma_2(AS,\|\cdot\|_2)
+
\bigl(
\operatorname{rad}(AS)+\|\bv\|_2
\bigr)\sqrt{u}
\right].
$$
Therefore,
$
\OPT_{=}-\epsilon(k,u)
\leq
\ROPT_{=}(k),
$
where
$$
\epsilon(k,u)
\lesssim
\|\lambda_\star\|_2
\left(
\frac{1}{\sqrt{k}}
+
\frac{\sqrt{u}}{k}
\right)
\left[
\gamma_2(AS,\|\cdot\|_2)
+
\bigl(
\operatorname{rad}(AS)+\|\bv\|_2
\bigr)\sqrt{u}
\right].
$$
Together with
\(\ROPT_{=}(k)\leq\OPT_{=}\),
this proves the result.
\end{proof}

\subsection{Preservation of dual bounds in inequality-constrained cases}

In this setting, we prove that random projection preserves dual bounds for inequality-constrained problems. Since feasibility is not automatically preserved under random projection, this yields only a one-sided bound guarantee. 
We first establish the preservation of the dual certificate using a generic chaining of mixed-tail process.

\begin{lem}
\label{lem:prev_dual_ineq}
Fix \(\lambda\in\mathbb{R}^m\) and a nonempty set
\(T\subseteq\mathbb{R}^m\). Let
\(G\in\mathbb{R}^{k\times m}\) have i.i.d.\ entries distributed as
\(N(0,1/k)\), and let \(G^+\) denote the entrywise positive part of \(G\).
Then there exist universal constants \(c,C>0\) such that, for every
\(u\geq1\), with probability at least \(1-Ce^{-cu}\),

$$
\sup_{\tv\in T}
\left|
\left\langle
\bigl(2G^\top G^+-I_m\bigr)\lambda,\tv
\right\rangle
\right|
\lesssim
\left(\|\lambda\|_1+\|\lambda\|_2\right)
\left[
\frac{
\gamma_2(T,\|\cdot\|_2)
+
\operatorname{rad}(T)\sqrt{u}
}{\sqrt{k}}
+
\frac{
\gamma_1(T,\|\cdot\|_2)
+
\operatorname{rad}(T)u
}{k}
\right].
$$

\end{lem}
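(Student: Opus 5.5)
We write $G=\frac{1}{\sqrt k}\Phi$ with rows $\phi_1^\top,\ldots,\phi_k^\top$, $\phi_i\overset{\mathrm{i.i.d.}}{\sim}N(\mathbf 0,I_m)$. Since $G^+=\frac1{\sqrt k}\Phi^+$, the matrix is an empirical average of independent rank-one terms:
\[
2G^\top G^+=\frac{2}{k}\sum_{i=1}^k \phi_i(\phi_i^+)^\top .
\]
Consequently, for each $\tv\in T$,
\[
\langle(2G^\top G^+-I_m)\lambda,\tv\rangle=\frac1k\sum_{i=1}^k Y_i(\tv),
\qquad
Y_i(\tv):=2\langle\phi_i^+,\lambda\rangle\langle\phi_i,\tv\rangle-\langle\lambda,\tv\rangle .
\]
First, we check that $\E Y_i(\tv)=0$. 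By independence of coordinates, $\E[\phi_{ij}^+\phi_{il}]=0$ for $j\neq l$, while $\E[\phi_{ij}^+\phi_{ij}]=\E[(\phi_{ij}^+)^2]=\tfrac12$. Hence $\E[2\phi_i(\phi_i^+)^\top]=I_m$. This defines a centered process $X_\tv:=\frac1k\sum_i Y_i(\tv)$ that is linear in $\tv$. We may set $X_{t_0}$ aside by comparing with the origin, since $X_{\mathbf 0}=0$ and $\mathbf 0$ need not lie in $T$ (see the last step).

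Second, we verify the hypothesis of Lemma~\ref{lem:subexp_mixed_tail}. By linearity, the increment is $X_\tv-X_\sv=\frac1k\sum_i Y_i(\tv-\sv)$, and each summand is a product of two sub-Gaussian variables minus its mean.

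The factor $\langle\phi_i,\tv-\sv\rangle$ is $N(0,\|\tv-\sv\|_2^2)$.

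The factor $\langle\phi_i^+,\lambda\rangle$ is a Lipschitz function of a Gaussian vector, with Lipschitz constant $\|\lambda\|_2$ because $x\mapsto x^+$ is $1$-Lipschitz coordinatewise. Its mean is $\E\langle\phi_i^+,\lambda\rangle=\frac{1}{\sqrt{2\pi}}\sum_j\lambda_j$, which is bounded in absolute value by $\|\lambda\|_1$. By Gaussian concentration and Lemmas~\ref{lem:centering} and~\ref{lem:constant_psi_2}, $\|\langle\phi_i^+,\lambda\rangle\|_{\psi_2}\lesssim\|\lambda\|_2+\|\lambda\|_1$. This is where the $\ell_1$ norm enters: the positive part has nonzero mean.

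By Lemma~\ref{lem:product} and centering (Lemma~\ref{lem:centering} also holds for $\psi_1$), we get
\[
\|Y_i(\tv-\sv)\|_{\psi_1}\lesssim(\|\lambda\|_1+\|\lambda\|_2)\|\tv-\sv\|_2 .
\]
So Lemma~\ref{lem:subexp_mixed_tail} applies with $K=\|\lambda\|_1+\|\lambda\|_2$. It gives the mixed-tail condition \eqref{eq:mixed_tail} with $d_2=\frac{K}{\sqrt k}\|\cdot\|_2$ and $d_1=\frac{K}{k}\|\cdot\|_2$.

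Third, we apply Lemma~\ref{lem:mixed_tail_chaining} on the set $T':=T\cup\{\mathbf 0\}$, with base point $t_0=\mathbf 0$. The chaining functionals are homogeneous under rescaling the metric, so $\gamma_2(T',d_2)=\frac{K}{\sqrt k}\gamma_2(T',\|\cdot\|_2)$ and $\gamma_1(T',d_1)=\frac Kk\gamma_1(T',\|\cdot\|_2)$. The diameters satisfy $\operatorname{diam}(T',\|\cdot\|_2)\le2\operatorname{rad}(T)$. The lemma therefore yields, with probability at least $1-e^{-cu}$,
\[
\sup_{\tv\in T}|X_\tv|\lesssim K\left[\frac{\gamma_2(T',\|\cdot\|_2)+\operatorname{rad}(T)\sqrt u}{\sqrt k}+\frac{\gamma_1(T',\|\cdot\|_2)+\operatorname{rad}(T)u}{k}\right].
\]
It remains to replace $T'$ by $T$. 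Take any admissible sequence for $T$ and add $\mathbf 0$ to each $\mathcal T_n$ with $n\ge1$; this keeps the cardinality bounds up to a harmless index shift. This shows $\gamma_p(T',\|\cdot\|_2)\lesssim\gamma_p(T,\|\cdot\|_2)+\operatorname{rad}(T)$ for $p=1,2$. The extra $\operatorname{rad}(T)$ terms are absorbed into $\operatorname{rad}(T)\sqrt u$ and $\operatorname{rad}(T)u$, since $u\ge1$.

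The main obstacle I expect is the second step. One must check that the $\psi_2$ bound for $\langle\phi_i^+,\lambda\rangle$ correctly accounts for its nonzero mean. One must also check that the cross-correlations between $\phi_i^+$ and $\phi_i$ are handled by Lemma~\ref{lem:product} without requiring independence; they are, since that lemma allows dependent factors. The remaining steps are bookkeeping with the earlier lemmas.
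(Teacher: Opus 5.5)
Your proposal is correct and follows essentially the same route as the paper. Both arguments decompose $\langle(2G^\top G^+-I_m)\lambda,\tv\rangle$ into $k$ independent centered summands, prove the same $\psi_1$ increment bound with $K=\|\lambda\|_1+\|\lambda\|_2$ (the $\ell_1$ norm entering through the nonzero mean of $\langle\phi_i^+,\lambda\rangle$), and then apply Lemmas~\ref{lem:subexp_mixed_tail} and~\ref{lem:mixed_tail_chaining}.

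The differences are minor. You get the $\psi_2$ bound on $\langle\phi_i^+,\lambda\rangle$ from Lipschitz Gaussian concentration, where the paper uses the centered linear-form estimate. You handle the base point by chaining over $T\cup\{\mathbf 0\}$ from $X_{\mathbf 0}=0$; this costs $\gamma_p(T\cup\{\mathbf 0\},\|\cdot\|_2)\lesssim\gamma_p(T,\|\cdot\|_2)+\operatorname{rad}(T)$, which is absorbed since $u\geq 1$. The paper instead fixes $\tv_0\in T$ and bounds $Z(\tv_0)$ separately by Bernstein's inequality and a union bound.
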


\begin{proof}
Let \(\gv_1^\top,\ldots,\gv_k^\top\) denote the rows of \(G\), so that
$
\gv_i\overset{\mathrm{i.i.d.}}{\sim}
N\left(0,\frac1k I_m\right).
$
Define the random process
$$
Z(\tv)
:=
\left\langle
\bigl(2G^\top G^+-I_m\bigr)\lambda,\tv
\right\rangle,\forall\tv\in T.
$$
Since
$
\langle G^+\lambda,G\tv\rangle
=
\sum_{i=1}^k
\langle\gv_i^+,\lambda\rangle
\langle\gv_i,\tv\rangle,
$,
we may write
$
Z(\tv)
=
\sum_{i=1}^k X_i(\tv),
$
where
$
X_i(\tv)
:=
2\langle\gv_i,\tv\rangle
\langle\gv_i^+,\lambda\rangle
-
\frac1k\langle\lambda,\tv\rangle.
$
Note that \(X_i(\tv)\) is centered. Indeed, for \(j\neq \ell\),
independence and symmetry imply
$
\mathbb{E}
\left[
(\gv_i)_j(\gv_i)_\ell^+
\right]
=0,
$
whereas
$
\mathbb{E}
\left[
(\gv_i)_j(\gv_i)_j^+
\right]
=
\mathbb{E}
\left[
(\gv_i)_j^2
\mathbf{1}_{\{(\gv_i)_j>0\}}
\right]
=
\frac{1}{2k}.
$
Therefore,
$
2\mathbb{E}
\left[
\langle\gv_i,\tv\rangle
\langle\gv_i^+,\lambda\rangle
\right]
=
\frac1k\langle\lambda,\tv\rangle,
$
and hence
$
\mathbb{E}X_i(\tv)=0 \text{ and }
\mathbb{E}Z(\tv)=0.
$

We next estimate the increments of the process. For
\(\tv,\sv\in T\),
$
X_i(\tv)-X_i(\sv)
=
2\langle\gv_i,\tv-\sv\rangle
\langle\gv_i^+,\lambda\rangle
-
\frac1k\langle\lambda,\tv-\sv\rangle.
$
We claim that
$
\left\|
X_i(\tv)-X_i(\sv)
\right\|_{\psi_1}
\lesssim
\frac{\|\lambda\|_1+\|\lambda\|_2}{k}
\|\tv-\sv\|_2.
$
Indeed, the coordinates of \(\gv_i\) are independent, centered,
sub-Gaussian random variables satisfying
$
\|G_{11}\|_{\psi_2}
\lesssim
\frac1{\sqrt{k}}.
$
Hence, the standard centered sub-Gaussian linear-form estimate gives
$
\left\|
\langle\gv_i,\tv-\sv\rangle
\right\|_{\psi_2}
\lesssim
\frac{\|\tv-\sv\|_2}{\sqrt{k}}.
$
On the other hand, the coordinates of \(\gv_i^+\) are independent,
non-centered, sub-Gaussian random variables. Since
$
\|G_{ij}^+\|_{\psi_2}
\lesssim
\|G_{ij}\|_{\psi_2}
\lesssim
\frac1{\sqrt{k}},
$
Lemma~\ref{lem:centering}, together with the standard centered
sub-Gaussian linear-form estimate, gives
$
\left\|
\langle\gv_i^+,\lambda\rangle
-
\mathbb{E}\langle\gv_i^+,\lambda\rangle
\right\|_{\psi_2}
\lesssim
\frac{\|\lambda\|_2}{\sqrt{k}}.
$
Moreover,
$
\left|
\mathbb{E}\langle\gv_i^+,\lambda\rangle
\right|
\leq
\mathbb{E}(G_{11})_+\|\lambda\|_1.
$
Since
$
\mathbb{E}(G_{11})_+
\leq
\mathbb{E}|G_{11}|
\leq
\left(\mathbb{E}G_{11}^2\right)^{1/2}
=
\frac1{\sqrt{k}},
$
we obtain
$
\left|
\mathbb{E}\langle\gv_i^+,\lambda\rangle
\right|
\leq
\frac{\|\lambda\|_1}{\sqrt{k}}.
$
Using the triangle inequality for the \(\psi_2\)-norm and
Lemma~\ref{lem:constant_psi_2}, it follows that
$
\left\|
\langle\gv_i^+,\lambda\rangle
\right\|_{\psi_2}
\lesssim
\frac{\|\lambda\|_1+\|\lambda\|_2}{\sqrt{k}}.
$
Applying Lemma~\ref{lem:product} therefore yields
$
\left\|
\langle\gv_i,\tv-\sv\rangle
\langle\gv_i^+,\lambda\rangle
\right\|_{\psi_1}
\lesssim
\frac{\|\lambda\|_1+\|\lambda\|_2}{k}
\|\tv-\sv\|_2.
$
The remaining deterministic term satisfies
$
\frac1k
\left|\langle\lambda,\tv-\sv\rangle\right|
\leq
\frac{\|\lambda\|_2}{k}\|\tv-\sv\|_2.
$
Thus, by the triangle inequality for the \(\psi_1\)-norm,
$
\left\|
X_i(\tv)-X_i(\sv)
\right\|_{\psi_1}
\lesssim
\frac{\|\lambda\|_1+\|\lambda\|_2}{k}
\|\tv-\sv\|_2.
$
Applying Lemma~\ref{lem:subexp_mixed_tail} to
$
Z(\tv)-Z(\sv)
=
\sum_{i=1}^k
\bigl(X_i(\tv)-X_i(\sv)\bigr),
$

we obtain that, for every \(v\geq1\),

$$
\mathbb{P}\left(
|Z(\tv)-Z(\sv)|
>
C(\|\lambda\|_1+\|\lambda\|_2)
\|\tv-\sv\|_2
\left(
\sqrt{\frac{v}{k}}+\frac{v}{k}
\right)
\right)
\leq
2e^{-v}.
$$

Thus, \((Z(\tv))_{\tv\in T}\) has mixed tails with respect to the
semimetrics
$
d_2(\tv,\sv)
:=
\frac{\|\lambda\|_1+\|\lambda\|_2}{\sqrt{k}}
\|\tv-\sv\|_2
$
and
$
d_1(\tv,\sv)
:=
\frac{\|\lambda\|_1+\|\lambda\|_2}{k}
\|\tv-\sv\|_2.
$

Fix an arbitrary \(\tv_0\in T\). By
Lemma~\ref{lem:mixed_tail_chaining}, for every \(u\geq1\), with
probability at least \(1-e^{-cu}\),

$$
\begin{aligned}
\sup_{\tv\in T}
|Z(\tv)-Z(\tv_0)|
\lesssim\;&
\gamma_2(T,d_2)
+
\gamma_1(T,d_1)
 +
\sqrt{u}\,\operatorname{diam}(T,d_2)
+
u\,\operatorname{diam}(T,d_1).
\end{aligned}
$$

By homogeneity of the generic chaining functionals and the semimetrics,
this gives

$$
\begin{aligned}
\sup_{\tv\in T}
|Z(\tv)-Z(\tv_0)|
\lesssim
(\|\lambda\|_1+\|\lambda\|_2)
\bigg[
&
\frac{
\gamma_2(T,\|\cdot\|_2)
+
\operatorname{diam}(T)\sqrt{u}
}{\sqrt{k}}
+
\frac{
\gamma_1(T,\|\cdot\|_2)
+
\operatorname{diam}(T)u
}{k}
\bigg].
\end{aligned}
$$

It remains to control the base point \(Z(\tv_0)\). Since
$
Z(\tv_0)=\sum_{i=1}^k X_i(\tv_0),
$
and the \(X_i(\tv_0)\) are independent, centered random variables, the same
argument used above, now with \(\sv=\mathbf 0\), gives
$
\|X_i(\tv_0)\|_{\psi_1}
\lesssim
\frac{\|\lambda\|_1+\|\lambda\|_2}{k}
\|\tv_0\|_2.
$

Therefore, Lemma~\ref{lem:subexp_mixed_tail}, or equivalently Bernstein's
inequality, implies that, with probability at least \(1-2e^{-cu}\),
$
|Z(\tv_0)|
\lesssim
(\|\lambda\|_1+\|\lambda\|_2)
\|\tv_0\|_2
\left(
\sqrt{\frac{u}{k}}
+
\frac{u}{k}
\right).
$

Since
$
\|\tv_0\|_2\leq\operatorname{rad}(T)
\text{ and }
\operatorname{diam}(T)\leq2\operatorname{rad}(T),
$ and
$
\sup_{\tv\in T}|Z(\tv)|
\leq
|Z(\tv_0)|
+
\sup_{\tv\in T}|Z(\tv)-Z(\tv_0)|
$, it
yields

$$
\sup_{\tv\in T}|Z(\tv)|
\lesssim
(\|\lambda\|_1+\|\lambda\|_2)
\left[
\frac{
\gamma_2(T,\|\cdot\|_2)
+
\operatorname{rad}(T)\sqrt{u}
}{\sqrt{k}}
+
\frac{
\gamma_1(T,\|\cdot\|_2)
+
\operatorname{rad}(T)u
}{k}
\right].
$$

A union bound over the two preceding events completes the proof.
\end{proof}

\begin{thm}
\label{thm:prev_dual_ineq}
Let
\(G\in\mathbb{R}^{k\times m}\) have i.i.d.\ entries distributed as
\(N(0,1/k)\). Suppose that strong duality holds for the
inequality-constrained problem, and let
\(\lambda_\star\in\mathbb{R}^m_+\) be an optimal dual multiplier.
Then, for every \(u\geq1\), with probability at least
\(1-Ce^{-cu}\),
$
\OPT_{\leq}-\epsilon(k)
\leq
\ROPT_{\leq}(k),
$
where
$$
\epsilon(k)
\lesssim
\left(
\|\lambda_\star\|_1+\|\lambda_\star\|_2
\right)
\left[
\frac{
\gamma_2(AS,\|\cdot\|_2)
+
\bigl(
\operatorname{rad}(AS)+\|\bv\|_2
\bigr)\sqrt{u}
}{\sqrt{k}}
+
\frac{
\gamma_1(AS,\|\cdot\|_2)
+
\bigl(
\operatorname{rad}(AS)+\|\bv\|_2
\bigr)u
}{k}
\right],
$$

and \(c,C>0\) are universal constants.
\end{thm}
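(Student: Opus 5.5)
The plan mirrors the proof of the equality-constrained theorem, with the multiplier $G\lambda_\star$ replaced by the sign-preserving certificate $\mu:=2G^+\lambda_\star\in\R^k$. Since $G^+$ has nonnegative entries and $\lambda_\star\geq\mathbf 0$, we get $\mu\geq\mathbf 0$. So $\mu$ is dual feasible for the projected inequality system $GA\xv\leq G\bv$ no matter what signs $G$ has.

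First, weak duality. For any $\xv$ feasible for \eqref{prob_ineq_proj}, we have $\iprod{\mu}{GA\xv-G\bv}\leq 0$, hence
\[
\ROPT_{\leq}(k)\geq\inf_{\xv\in S}\left\{f(\xv)+\iprod{2G^+\lambda_\star}{G(A\xv-\bv)}\right\}
=\inf_{\xv\in S}\left\{f(\xv)+\iprod{2G^\top G^+\lambda_\star}{A\xv-\bv}\right\}.
\]
This holds deterministically and needs no assumption on the projected problem. If it is infeasible, its value is $+\infty$ and the bound is trivial.

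Second, add and subtract $\iprod{\lambda_\star}{A\xv-\bv}$ and use strong duality, exactly as in the equality case:
\[
\ROPT_{\leq}(k)\geq \OPT_{\leq}-\sup_{\xv\in S}\left|\iprod{(2G^\top G^+-I_m)\lambda_\star}{A\xv-\bv}\right|.
\]
Third, apply Lemma~\ref{lem:prev_dual_ineq} with $\lambda=\lambda_\star$ and $T:=AS-\bv$. With probability at least $1-Ce^{-cu}$, the supremum is bounded by the lemma's right-hand side evaluated on $T$.

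Finally, convert the geometric quantities of $T$ into those of $AS$. Both $\gamma_2(\cdot,\|\cdot\|_2)$ and $\gamma_1(\cdot,\|\cdot\|_2)$ are translation invariant: an admissible sequence for $AS$ shifted by $-\bv$ is admissible for $AS-\bv$, with identical distances. This gives $\gamma_p(AS-\bv,\|\cdot\|_2)=\gamma_p(AS,\|\cdot\|_2)$ for $p=1,2$. The triangle inequality gives $\operatorname{rad}(AS-\bv)\leq\operatorname{rad}(AS)+\|\bv\|_2$. Substituting these yields the stated $\epsilon(k)$.

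There is no real obstacle, since the probabilistic work is already in Lemma~\ref{lem:prev_dual_ineq}. The only points that need care are the following.
\begin{itemize}
\item The feasibility of $\mu=2G^+\lambda_\star$ is what makes weak duality valid here; with $G\lambda_\star$ it would fail.
\item The lemma requires $T$ nonempty. This holds whenever $S\neq\emptyset$, and if $S=\emptyset$ the statement is vacuous.
\item Translation invariance of $\gamma_1$ must be checked. It is as immediate as for $\gamma_2$.
\end{itemize}
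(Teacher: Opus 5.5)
Your proposal is correct and follows essentially the same argument as the paper: the sign-preserving certificate $2G^+\lambda_\star$, weak duality, and then Lemma~\ref{lem:prev_dual_ineq}. The only difference is how $\bv$ is handled. The paper applies the lemma separately to $T=AS$ and to the singleton $T=\{\bv\}$ and takes a union bound, whereas you apply it once to $T=AS-\bv$ and use translation invariance of $\gamma_1,\gamma_2$ together with $\operatorname{rad}(AS-\bv)\leq\operatorname{rad}(AS)+\|\bv\|_2$, which gives the same bound and mirrors the equality-case proof.
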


\begin{proof}
By strong duality,
$
\OPT_{\leq}
=
\inf_{\xv\in S}
\left\{
f(\xv)
+
\left\langle
\lambda_\star,A\xv-\bv
\right\rangle
\right\}.
$
Since \(\lambda_\star\geq\mathbf{0}\) and \(G^+\) is entrywise
nonnegative, the vector
$
2G^+\lambda_\star\in\mathbb{R}^k_+
$
is a feasible dual multiplier for the randomly projected
inequality constraints. Hence, by weak duality,
$$
\ROPT_{\leq}(k)
\geq
\inf_{\xv\in S}
\left\{
f(\xv)
+
2\left\langle
G^+\lambda_\star,
G(A\xv-\bv)
\right\rangle
\right\}.
$$
Therefore,
\begin{align*}
\OPT_{\leq}-\ROPT_{\leq}(k)
&\leq
\sup_{\xv\in S}
\left[
\left\langle
\lambda_\star,A\xv-\bv
\right\rangle
2\left\langle
G^+\lambda_\star,G(A\xv-\bv)
\right\rangle
\right] \\
&=
\sup_{\xv\in S}
\left[
\left\langle
\lambda_\star,A\xv
\right\rangle
-
2\left\langle
G^+\lambda_\star,GA\xv
\right\rangle
\right]
+
2\left\langle
G^+\lambda_\star,G\bv
\right\rangle
-
\left\langle
\lambda_\star,\bv
\right\rangle
\\
&\leq
\sup_{\xv\in S}
\left|
\left\langle
\bigl(2G^\top G^+-I_m\bigr)\lambda_\star,
A\xv
\right\rangle
\right|
+
\left|
2\left\langle
G^+\lambda_\star,G\bv
\right\rangle
-
\left\langle
\lambda_\star,\bv
\right\rangle
\right|.
\end{align*}

We control the two terms separately. Applying
Lemma~\ref{lem:prev_dual_ineq} with \(T=AS\), we obtain, with probability
at least \(1-Ce^{-cu}\),
$$
\begin{aligned}
&\sup_{\xv\in S}
\left|
\left\langle
\bigl(2G^\top G^+-I_m\bigr)\lambda_\star,
A\xv
\right\rangle
\right|
\\
&\qquad\lesssim
\left(
\|\lambda_\star\|_1+\|\lambda_\star\|_2
\right)
\left[
\frac{
\gamma_2(AS,\|\cdot\|_2)
+
\operatorname{rad}(AS)\sqrt{u}
}{\sqrt{k}}
+
\frac{
\gamma_1(AS,\|\cdot\|_2)
+
\operatorname{rad}(AS)u
}{k}
\right].
\end{aligned}
$$
For the second term, observe that
$$
2\left\langle
G^+\lambda_\star,G\bv
\right\rangle
-
\left\langle
\lambda_\star,\bv
\right\rangle
=
\left\langle
\bigl(2G^\top G^+-I_m\bigr)\lambda_\star,
\bv
\right\rangle.
$$

Applying Lemma~\ref{lem:prev_dual_ineq} again, now with the singleton
\(T=\{\bv\}\), and noting that
$
\gamma_2(\{\bv\},\|\cdot\|_2)
=
\gamma_1(\{\bv\},\|\cdot\|_2)
=
0 \text{ and }
\operatorname{rad}(\{\bv\})
=
\|\bv\|_2,
$
gives
$$
\left|
2\left\langle
G^+\lambda_\star,G\bv
\right\rangle
-
\left\langle
\lambda_\star,\bv
\right\rangle
\right|
\lesssim
\left(
\|\lambda_\star\|_1+\|\lambda_\star\|_2
\right)
\left[
\frac{\|\bv\|_2\sqrt{u}}{\sqrt{k}}
+
\frac{\|\bv\|_2u}{k}
\right].
$$

Combining the two estimates and applying a union bound yields, with
probability at least \(1-Ce^{-cu}\),
\begin{align*}
    \OPT_{\leq}-\ROPT_{\leq}(k) \\
& \lesssim
\left(
\|\lambda_\star\|_1+\|\lambda_\star\|_2
\right)
\left[
\frac{
\gamma_2(AS,\|\cdot\|_2)
+
\bigl(
\operatorname{rad}(AS)+\|\bv\|_2
\bigr)\sqrt{u}
}{\sqrt{k}}
+
\frac{
\gamma_1(AS,\|\cdot\|_2)
+
\bigl(
\operatorname{rad}(AS)+\|\bv\|_2
\bigr)u
}{k}
\right].
\end{align*}
This proves the desired one-sided bound.
\end{proof}

\begin{rem}
\label{rem:lin_help}
Suppose that the objective function \(f(\cdot)\) is linear. Then the objective appearing in the weak-duality bound is linear
in \(\xv\). Hence, the infimum may be restricted to the vertices of \(S\):
\begin{align*}
\ROPT_{\leq}(k)
&\geq
\inf_{\xv\in S}
\left\{
f(\xv)
+
2\left\langle
G^+\lambda_\star,
G(A\xv-\bv)
\right\rangle
\right\}
=
\inf_{\xv\in V(S)}
\left\{
f(\xv)
+
2\left\langle
G^+\lambda_\star,
G(A\xv-\bv)
\right\rangle
\right\},
\end{align*}
where \(V(S)\) denotes the set of vertices of \(S\).

Consequently, the corresponding random process can be indexed by
\(AV(S)\) and the error bound can be sharpened to
$$
\epsilon(k,u)
\lesssim
\left(
\|\lambda_\star\|_1+\|\lambda_\star\|_2
\right)
\left[
\frac{
\gamma_2(AV(S),\|\cdot\|_2)
+
\bigl(
\operatorname{rad}(AS)+\|\bv\|_2
\bigr)\sqrt{u}
}{\sqrt{k}}
+
\frac{
\gamma_1(AV(S),\|\cdot\|_2)
+
\bigl(
\operatorname{rad}(AS)+\|\bv\|_2
\bigr)u
}{k}
\right].
$$
In general,
$
\gamma_2(AV(S),\|\cdot\|_2)
\asymp
\gamma_2(AS,\|\cdot\|_2),
$
whereas
$
\gamma_1(AV(S),\|\cdot\|_2)
\leq
\gamma_1(AS,\|\cdot\|_2).
$
Thus, linearity of the objective function does not substantially improve the \(\gamma_2\)-term, but it may yield a strictly smaller \(\gamma_1\)-term; see Table~\ref{tab:gamma_simplex_ball}.
\end{rem}

\subsection{Simultaneous preservation of primal-dual bounds in inequality-constrained cases}

In general, classical random projection cannot simultaneously preserve both
the primal and dual bounds for inequality-constrained problems. A simple
example is when the feasible set \(S\) is a singleton. Suppose
\(S=\{\bar{\xv}\}\) and that the original problem is strictly feasible, so that
$
A\bar{\xv}<\bv.
$
Let
$
\rv:=\bv-A\bar{\xv}\in\mathbb{R}^m_{+},
\rv\neq 0.
$
Then the projected problem is feasible if and only if
$
GA\bar{\xv}\leq G\bv,
$
or equivalently,
$
G\rv\geq 0.
$
If \(G\in\mathbb{R}^{k\times m}\) has i.i.d.\ centered Gaussian entries, then
each coordinate of \(G\rv\) is an independent centered Gaussian random
variable. Hence,
$
\mathbb{P}(G\rv\geq 0)=2^{-k}.
$
Therefore,
$
\mathbb{P}\bigl(
\text{the projected problem is infeasible}
\bigr)
=
1-2^{-k}.
$
Thus, even though the original problem is feasible, its randomly projected
counterpart is infeasible with overwhelming probability as \(k\) grows.

Therefore, in order to simultaneously preserve primal and dual bounds using
the classical signed Gaussian projection, one needs additional
assumptions on both the feasible region and the objective function. Such
assumptions must account for the rotational nature of Gaussian projections.
For instance, one may quantify how ``ball-like'' the feasible region is, so
that rotating a feasible solution is unlikely to destroy feasibility, together
with how sensitive the objective value is to such rotations. An alternative way to recover primal feasibility is to modify the projection
matrix itself. For example, \cite{poirion2023random} considers a nonnegative
random projection obtained from the Hadamard square (entrywise square) of a
Gaussian matrix. Since the resulting projection matrix is nonnegative,
$
A\xv\leq\bv
\Longrightarrow
GA\xv\leq G\bv,
$
and hence the projected problem remains a relaxation of the original problem.
The price of this construction, however, is that it no longer enjoys the same
unbiased bilinear-preservation property underlying the classical Gaussian
dual certificate. In particular, the natural dual projection used in the
equality-constrained Gaussian setting does not directly yield an analogous
preservation of the dual bound.

If one gives up the requirement of a vanishing projection error and allows a
controlled bias in the projected constraints, then a simultaneous primal--dual
bound can be recovered. Consider
\begin{equation}
\label{eq:sm_bound}
    \begin{aligned}
         \ROPT_{\leq}(k,\alpha,\beta) := \inf_{\xv} \quad & f(\xv) \\
        \text{s.t.} \quad
        &  GA\xv
     \leq
     \alpha G\bv
     +
     \frac{\beta}{\sqrt{k}}\ev, \\
        & \xv \in S.
    \end{aligned}
\end{equation}
Suppose that \(\mathbf{0}\in S\), \(S\) is convex, \(f\) is convex, and
\(f(\mathbf{0})=0\). The assumptions \(\mathbf{0}\in S\) and
\(f(\mathbf{0})=0\) can be imposed essentially without loss of generality by
translating the feasible region and shifting the objective function by a
constant.
Let \(\xv_\star\) be an optimal solution of the original problem and define
$
    \xv_\alpha:=\alpha\xv_\star.
$
Then \(\xv_\alpha\in S\), and by convexity,
$
f(\xv_\alpha)
=
f(\alpha\xv_\star+(1-\alpha)\mathbf{0})
\leq
\alpha f(\xv_\star)
=
\alpha\OPT_{\leq}.
$
Moreover, writing
$
    \rv_\star:=\bv-A\xv_\star\geq0,
$
the feasibility of \(\xv_\alpha\) for~\eqref{eq:sm_bound} reduces to
controlling the negative fluctuations of the Gaussian vector
\(G\rv_\star\), since
$
    GA\xv_\alpha-\alpha G\bv
    =
    -\alpha G\rv_\star.
$
Each coordinate of \(G\rv_\star\) is centered Gaussian with standard
deviation of order \(\|\rv_\star\|_2/\sqrt{k}\). Therefore, choosing
\(\beta\) of order
$
   \alpha\|\rv_\star\|_2\sqrt{\log k}
$
ensures, with high probability, that \(\xv_\alpha\) is feasible. Hence,
$
    \ROPT_{\leq}(k,\alpha,\beta)
    \leq
    \alpha\OPT_{\leq}.
$

On the other hand, the dual argument is essentially the same as in
Theorem~\ref{thm:prev_dual_ineq}. Using \(2G^+\lambda_\star\) as a feasible
dual multiplier for the projected problem yields, with high probability,
$
    \OPT_{\leq}
    -
    \epsilon(k,\alpha,\beta)
    \leq
    \ROPT_{\leq}(k,\alpha,\beta),
$
where \(\epsilon(k,\alpha,\beta)\) consists of the previous random-projection
error together with the additional bias introduced by the modified
right-hand side. In particular, the perturbation
\(\beta\ev/\sqrt{k}\) contributes the term
$
    \frac{2\beta}{\sqrt{k}}
    \left\langle G^+\lambda_\star,\ev\right\rangle,
$
whose contribution is of order
$
    O\!\left(
        \beta\|\lambda_\star\|_1
        +
        \beta\|\lambda_\star\|_2
        \sqrt{\frac{u}{k}}
    \right)
$
with high probability. Therefore, the dominant additional error is of order
\(\beta\|\lambda_\star\|_1\), which does not vanish as \(k\to\infty\) when
\(\beta\) is bounded away from zero. 

Combining the primal and dual arguments gives a simultaneous bound of the
form
$$
    \OPT_{\leq}
    -
    \epsilon(k,\alpha,\beta)
    \leq
    \ROPT_{\leq}(k,\alpha,\beta)
    \leq
    \alpha\OPT_{\leq},
$$
where, schematically,
$
    \epsilon(k,\alpha,\beta)
    =
    \text{vanishing error}
    +
    O\!\left(
        \beta\|\lambda_\star\|_1\right)
$
Thus, unlike the equality-constrained case, simultaneous preservation of the
primal and dual bounds is possible only after introducing a controlled,
generally nonvanishing bias. Since the proof follows the same arguments as
above, we omit the details.

\section{Conclusion}

In this paper, we study random projection in optimization through Lagrangian
duality and generic chaining. For equality-constrained problems, this viewpoint
yields projection guarantees for general convex feasible regions, with the
error governed by the geometric complexity
\(\gamma_2(AS,\|\cdot\|_2)\), and can lead to sharper bounds than
simplex-based analyses. For inequality-constrained problems, signed Gaussian
projection does not preserve primal feasibility. Nevertheless, we show that
the dual bound can still be approximately preserved by projecting the dual
multiplier as \(2G^+\lambda_\star\). The resulting mixed-tail process leads
naturally to error bounds involving both \(\gamma_2\) and \(\gamma_1\).

Several questions remain open. Most notably, it would be interesting to
characterize when simultaneous preservation of primal and dual bounds with
vanishing error is possible for inequality-constrained problems. This may
require additional geometric assumptions on the feasible region and objective
function, or alternative random projections that balance preservation of
primal feasibility with preservation of dual certificates.

\appendix

\section{Remaining proofs}

\label{app_proofs}
\textbf{Proof of Lemma \ref{lem:gaussian_linear_process}}

\begin{proof}
Consider the centered Gaussian process
$
    X_{\tv}:=\langle Pg,\tv\rangle,
    \tv\in T.
$
Its canonical metric satisfies
$
\begin{aligned}
d_X(\tv,\sv)
=
\left(
\mathbb{E}|X_{\tv}-X_{\sv}|^2
\right)^{1/2} 
=
\|P^\top(\tv-\sv)\|_2
\leq
\|\tv-\sv\|_2.
\end{aligned}
$
By Gaussian comparison Lemma \cite{vershynin2018highdim}
$
    \mathbb{E}
    \sup_{\tv\in T}
    |X_{\tv}|
    \lesssim
    \gamma_2(T,\|\cdot\|_2)
    +
    \operatorname{rad}(T).
$
Next, since the map
$
    g\mapsto
    \sup_{\tv\in T}
    |\langle Pg,\tv\rangle|
$
is \(\operatorname{rad}(T)\)-Lipschitz, 
Hence, Gaussian concentration  \cite{vershynin2018highdim} implies that, with probability at least
\(1-Ce^{-cu}\),

$$
    \sup_{\tv\in T}
    |\langle Pg,\tv\rangle|
    \lesssim
    \gamma_2(T,\|\cdot\|_2)
    +
    \operatorname{rad}(T)
    +
    \operatorname{rad}(T)\sqrt{u}.
$$

Since \(u\geq1\), the middle term can be absorbed into
\(\operatorname{rad}(T)\sqrt{u}\), which proves the claim.
\end{proof}

\begin{prop}
\label{prop_gamma_1}
Let
$
\Delta_n
=
\operatorname{conv}\{0,e_1,\ldots,e_n\}.
$
Then
$
\gamma_1(\Delta_n,\|\cdot\|_2)
\asymp
\sqrt n.
$
\end{prop}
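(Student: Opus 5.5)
The claim $\gamma_1(\Delta_n,\|\cdot\|_2)\asymp\sqrt n$ needs a matching upper and lower bound. I will obtain the upper bound from Talagrand's growth-condition/majorizing-measure estimates combined with the known scale-by-scale entropy of $\Delta_n$, and the lower bound from a large well-separated subset of $\Delta_n$ together with a counting argument.

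\textbf{Upper bound.} For any admissible sequence, $\sup_t\sum_j 2^{j}d(t,\mathcal T_j)$ is controlled by the entropy numbers $e_j(\Delta_n):=\inf_{|\mathcal T|\le 2^{2^j}}\sup_{t}d(t,\mathcal T)$. Summing them gives
\[
\gamma_1(\Delta_n,\|\cdot\|_2)\lesssim \sum_{j\ge0}2^{j}e_j(\Delta_n).
\]
I will split the sum at $2^j\approx n$.

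For $2^j\ge n$, I use volumetric covering. The set $\Delta_n$ lies in an $n$-dimensional ball of radius $1$, so $e_j\lesssim 2^{-2^j/n}$. The tail is then
\[
\sum_{2^j\ge n}2^{j}2^{-2^j/n}\lesssim n,
\]
which is too large, so the scales must be handled more carefully.

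For $2^j\le n$, I use Maurey's empirical method. Every point of $\Delta_n$ is approximated within $\ell_2$-distance $1/\sqrt{r}$ by an average of $r$ vertices, which costs $\log N\approx r\log(n/r)$. This gives $e_j\lesssim \sqrt{\log(n/2^j)/2^j}$ roughly. Summing $2^j e_j$ up to $2^j\approx n$ then gives about $\sqrt n$.

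Above $2^j\approx n$, the residual error is a vector whose $\ell_1$-norm is $\lesssim 1$ but which lives in $n$ dimensions. I expect the refined Carl/Schütt entropy bound $e_j\lesssim 2^{-2^j/n}/\sqrt n$ to hold there. This should follow because, after step $n$, $\Delta_n$ is essentially covered by $\ell_2$-balls of radius $n^{-1/2}$, and then scale-by-scale volumetric covering applies. The tail then sums to
\[
\sum_{2^j\ge n}2^j 2^{-2^j/n}/\sqrt n\lesssim \sqrt n .
\]

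\textbf{Lower bound.} I use the Sudakov-type inequality for $\gamma_1$: for any $\epsilon$,
\[
\gamma_1(T,d)\gtrsim \epsilon\log N(T,d,\epsilon),
\]
which follows directly from the definition, since at level $j$ with $2^{2^j}<N$ some point is at distance $\ge\epsilon/2$. I take $\epsilon=c/\sqrt n$ and the points $\frac1n\sum_{i\in I}e_i$, $I\subseteq[n]$, which lie in $\Delta_n$. By Gilbert–Varshamov there are $2^{cn}$ such points with pairwise symmetric differences of size $\ge cn$, hence pairwise distances $\ge c'/\sqrt n$. This gives $\log N\gtrsim n$ and so $\gamma_1\gtrsim \sqrt n$.

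\textbf{Main obstacle.} The hardest step is the upper-bound tail at scales $2^j\gtrsim n$. Naive volumetric covering loses a factor $\sqrt n$ there, so I need the sharp entropy estimate for $\Delta_n$ (or $B_1^n$) in $\ell_2$ at small radii, namely
\[
\log N(B_1^n,\epsilon)\asymp n\log(1/(\epsilon\sqrt n))\qquad\text{for }\epsilon<n^{-1/2}.
\]
I would invoke this as a known result (Schütt) rather than re-derive it, and check that both regimes sum to $O(\sqrt n)$.
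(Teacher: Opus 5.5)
Your proposal is correct and follows essentially the same route as the paper. For the upper bound you bound $\gamma_1$ by $\sum_j 2^j e_j$ and split by regimes using the Sch\"utt entropy estimates for $B_1^n\supseteq\Delta_n$ in $\ell_2$; for the middle regime you use Maurey's empirical method, which is the usual proof of that part of Sch\"utt's estimate. For the lower bound you combine $\gamma_1\gtrsim\varepsilon\log N$ with the Varshamov--Gilbert packing $\{\sigma/n\}$ at scale $n^{-1/2}$, exactly as the paper does. Your heuristic for the small-scale tail already works as stated, so you could skip citing Sch\"utt: Maurey with $r=n$ covers $\Delta_n$ by at most $4^n$ balls of radius $n^{-1/2}$, and covering each of these volumetrically gives $\log N(\Delta_n,\delta)\lesssim n\log\bigl(C/(\delta\sqrt n)\bigr)$ for $\delta\le n^{-1/2}$.
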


\begin{proof}
Since
$
\Delta_n\subseteq B_1^n,
$
it suffices for the upper bound to estimate
\(\gamma_1(B_1^n,\|\cdot\|_2)\).
Let
$
e_r(B_1^n,\|\cdot\|_2)
:=
\inf\left\{
\varepsilon>0:
N(B_1^n,\|\cdot\|_2,\varepsilon)
\leq 2^{r-1}
\right\}
$
denote the entropy numbers of \(B_1^n\) under the Euclidean metric.
By Sch\"utt~\cite{schutt1984entropy},
\[
e_r
\asymp
\begin{cases}
1, & r\lesssim \log n,\\[1mm]
\displaystyle
\sqrt{\frac{\log(1+n/r)}{r}},
& \log n\lesssim r\lesssim n,\\[3mm]
\displaystyle
n^{-1/2}2^{-r/n},
& r\gtrsim n.
\end{cases}
\]
Using the entropy bound \cite{van2014probability} for the chaining functional,
$
\gamma_1(T,d)
\lesssim
\sum_{j\geq0}2^j e_{2^j}(T,d),
$
we obtain, since \(\Delta_n\subseteq B_1^n\),
$$
\gamma_1(\Delta_n,\|\cdot\|_2)
\leq
\gamma_1(B_1^n,\|\cdot\|_2)
\lesssim
\sum_{j\geq0}
2^j e_{2^j}(B_1^n,\|\cdot\|_2).
$$
We split the sum according to the three regimes of the entropy numbers.

First, when \(2^j\lesssim \log n\), we have
\(e_{2^j}\asymp 1\), and hence
\[
\sum_{2^j\lesssim\log n}
2^j e_{2^j}
\lesssim
\sum_{2^j\lesssim\log n}2^j
\lesssim
\log n
\lesssim
\sqrt n.
\]

Next, suppose
$
\log n\lesssim 2^j\lesssim n.
$
Then
$
e_{2^j}
\asymp
\sqrt{
\frac{\log(1+n/2^j)}{2^j}
},
$
so that
$
2^j e_{2^j}
\asymp
\sqrt{
2^j\log(1+n/2^j)
}.
$
Let \(J\) satisfy \(2^J\asymp n\) and write \(j=J-\ell\). Then
$
2^j\asymp \frac{n}{2^\ell},
\log\left(1+\frac{n}{2^j}\right)
\lesssim
\ell+1.
$
Therefore,
\begin{align*}
\sum_{\log n\lesssim2^j\lesssim n}
2^j e_{2^j}
&\lesssim
\sqrt n
\sum_{\ell\geq0}
\sqrt{\frac{\ell+1}{2^\ell}} \lesssim
\sqrt n,
\end{align*}
since
$
\sum_{\ell\geq0}
\sqrt{\frac{\ell+1}{2^\ell}}
<\infty.
$

Finally, when \(2^j\gtrsim n\),
$
e_{2^j}
\asymp
n^{-1/2}2^{-2^j/n}.
$
Thus,
$
2^j e_{2^j}
\asymp
\sqrt n
\left(\frac{2^j}{n}\right)
2^{-2^j/n}.
$
Again writing \(2^j/n\asymp 2^\ell\), we obtain
\[
\sum_{2^j\gtrsim n}
2^j e_{2^j}
\lesssim
\sqrt n
\sum_{\ell\geq0}
2^\ell 2^{-c2^\ell}
\lesssim
\sqrt n.
\]
Combining the three regimes gives
$
\gamma_1(\Delta_n,\|\cdot\|_2)
\lesssim
\sqrt n.
$

For the reverse inequality, we use the standard entropy lower bound \cite{van2014probability}
\[
\gamma_1(T,d)
\gtrsim
\sup_{\varepsilon>0}
\varepsilon\log N(T,d,\varepsilon).
\]
Thus, it suffices to exhibit a scale \(\varepsilon\) at which
\(\Delta_n\) has exponentially large covering number.

To this end, by the Varshamov-Gilbert bound \cite{tsybakov2009introduction}, there exists a set
\(\mathcal C\subseteq\{0,1\}^n\) such that
$
|\mathcal C|\geq e^{cn}, 
d_H(\sigma,\tau)\geq cn
$
for all distinct \(\sigma,\tau\in\mathcal C\). For each
\(\sigma\in\mathcal C\) and some constant $c$, define
$
x^\sigma:=\frac{\sigma}{n}.
$
Since
$
\sum_{i=1}^n x_i^\sigma
=
\frac{\|\sigma\|_1}{n}
\leq 1,
$
we have \(x^\sigma\in\Delta_n\). Moreover, for distinct
\(\sigma,\tau\in\mathcal C\),
$
\|x^\sigma-x^\tau\|_2
=
\frac{\sqrt{d_H(\sigma,\tau)}}{n}
\gtrsim
\frac{1}{\sqrt n}.
$
Hence \(\Delta_n\) contains an exponentially large packing at scale
\(1/\sqrt n\), and therefore
$
\log N\left(
\Delta_n,\|\cdot\|_2,\frac{c}{\sqrt n}
\right)
\gtrsim n.
$
Applying the entropy lower bound at
\(\varepsilon\asymp n^{-1/2}\) gives
$
\gamma_1(\Delta_n,\|\cdot\|_2)
\gtrsim
\frac{1}{\sqrt n}\,n
=
\sqrt n.
$
Combining this with the upper bound yields
$
\gamma_1(\Delta_n,\|\cdot\|_2)
\asymp
\sqrt n.
$
\end{proof}

\addcontentsline{toc}{chapter}{References}
\bibliographystyle{plain}
\bibliography{ref}

\appendix

\end{document}